\newtheorem{theorem}{Theorem}
\newtheorem{proposition}[theorem]{Proposition}
\newtheorem{lemma}[theorem]{Lemma}
\newtheorem{corollary}[theorem]{Corollary}
\theoremstyle{definition}
\newtheorem{example}{Example}
\theoremstyle{remark}
\newtheorem{remark}[theorem]{Remark}
\newtheorem{question}[theorem]{Question}
\newcommand{\C}{\mathrm{C}}
\newcommand{\K}{\mathrm{K}}
\title{Good subsemigroups of $\mathbb N^n$}
\author[D'Anna]{Marco D'Anna}
\address{Dipartimento di Matematica e Informatica, Viale A. Doria, 6 - I 95125 – Catania, Italia}
\email{mdanna@dmi.unict.it}
\author[Garc\'ia-S\'anchez]{Pedro A. Garc\'ia-S\'anchez}
\address{Departamento de \'Algebra and CITIC-UGR, Universidad de Granada, E-18071 Granada, Espa\~na}
\email{pedro@ugr.es} 
\thanks{The second author is supported by the projects MTM2014-55367-P, which is funded by Ministerio de Econom\'{\i}a y Competitividad and Fondo Europeo de Desarrollo Regional FEDER, and by the Junta de Andaluc\'{\i}a Grant Number FQM-343}
\author[Micale]{Vincenzo Micale}
\address{Dipartimento di Matematica e Informatica, Viale A. Doria, 6 - I 95125 – Catania, Italia}
\email{vmicale@dmi.unict.it}
\thanks{The authors would like to thank B. A. Heredia for his comments and suggestions}
\author[Tozzo]{Laura Tozzo}
\address{Dipartimento di Matematica, Universita' degli Studi di Genova, Via Dodecaneso 35 - I 16154 - Genova, 	Italia}
\curraddr{Department of Mathematics, University of Kaiserslautern - D 67663 - Kaiserslautern, Deutschland}
\email{tozzo@mathematik.uni-kl.de}
\keywords{Good semigroup, Arf closure,  semigroup of values, algebroid curve}
\subjclass[2010]{20M14, 13A18, 14H50,20-04}
\begin{document}

\begin{abstract}
Value semigroups of non irreducible singular algebraic curves and their fractional ideals are submonoids of $\mathbb Z^n$ that are closed under infimums, have a conductor and fulfill a special compatibility property on their elements. Monoids of $\mathbb N^n$ fulfilling these three conditions are known in the literature as good semigroups and there are examples of good semigroups that are not realizable as the value semigroup of an algebraic curve. In this paper we consider good semigroups independently from their algebraic counterpart, in a purely combinatoric setting. We define the concept of good system of generators, and we show that minimal good systems of generators are unique. 
Moreover, we give a constructive way to compute the canonical ideal and the Arf closure of a good subsemigroup when $n=2$.
\end{abstract}
\maketitle

\section{Introduction}

The notion of good semigroup has been given formally in \cite{a-u}, where the authors studied the properties of value semigroups of a one dimensional analytically unramified ring, that is, of a singularity of an algebraic curve. The properties of these semigroups were already considered in \cite{two, c-d-gz, C-D-K, danna1, felix, delgado, garcia}, but it was in \cite{a-u} that it  was proved that the class of good semigroups is larger than the one of value semigroups. Hence, such semigroups are relevant by their own and they form a natural generalization of numerical semigroups. However, they are more difficult to study than the numerical semigroups, mainly because they are not finitely generated as monoids, and they are not closed under finite intersections. In spite of this, good semigroups can be described by means of a finite set of their elements. 

For value semigroups of algebroid curves there are several approaches in the literature to describe these monoids by means of a finite set of data. 
In \cite{garcia,waldi}, singularities with only two branches are studied, and the finite set considered is the set of maximal elements. Then the value semigroup coincides with the elements in the Cartesian product of the value semigroups of the branches that do not share a coordinate with a maximal element and have the other bigger than this maximal element. 
This approach has been generalized to the case of more than two branches in \cite{felix}. An alternative can be found in \cite{c-d-gz}, where the authors introduce $w$-generators for planar algebroid curves. In this setting, it is shown that the value semigroup can be described by a finite set of $w$-generators (not necessarily belonging to the semigroup) and a boolean expression. For the non planar case, we refer to \cite{a-u, two,arf,C-D-K, felix}.
In particular, in \cite{arf} the authors show that the equimultiplicity class of a singularity can be determined using a finite set of data. This data is equivalent to give a good semigroup satisfying the Arf property.

Our approach is different in nature and takes advantage of the algebraic structure of good semigroups. Moreover, it does not only apply to value semigroups of singular algebraic curves (with any number of branches and any embedding dimension), but also to good semigroups not realizable as value semigroups of curves. In particular, our approach allows to develop useful computational tools to study good semigroups, which was one of the motivations of this study. 

A first idea is to consider the \emph{small elements} of the semigroup, that is, the set of elements between $0$ and the conductor of the semigroup with the usual partial order. It is easy to see that the set of these elements, denoted by $\mathrm{Small}(S)$, determines the semigroup. Therefore, the next natural step is to study subsets $G \subsetneq\mathrm{Small}(S)$, from which is possible to recover completely the semigroup $S$. We define such a subset $G$ to be a \emph{good generating system}.
We call $G$ \emph{minimal} when none of its proper subsets is a good generating system.
It is natural to ask if these minimal generating systems are unique: we prove that this is true in the local case (Theorem \ref{th:unique}), as happens in the
``classical'' setting of cancellative monoids. 
The same is not true in general for the non local case, but it is possible to reduce to the local case.

The structure of the paper is the following.

In Section \ref{sec:good-sem} we recall the concept of good
semigroups and how to obtain them in different ways (see for instance Example \ref{ex:two-good}). 
Given a good semigroup $S$, we define the set of its small elements $\mathrm{Small}(S)$. 
Since $S$ is fully determined by $\mathrm{Small}(S)$ (Proposition \ref{smallelements}), we deduce a first membership test (Proposition
\ref{prop:membership}).

In Section \ref{sec:good-gen-sys} we define the concept of minimal good generating system for $S$. 
In the local case we prove that minimal good generating systems are unique (Theorem \ref{th:unique}). 

Section \ref{sec:ideals} generalizes the results of the previous
section to good relative ideals of $S$. We define the concept of minimal good generating system for a good relative ideal, and we prove that  minimal good generating systems are unique (Theorem \ref{th:uniqueideal}). 
Then we consider the special case of the canonical ideal of $S$. Canonical ideals are important as they play
a crucial role in many properties of good semigroups
\cite{danna1,K-T-S}. 
We give a constructive way to compute the canonical ideal in the two dimensional case, by finding a (non minimal, in general) good
generating system of generators (Proposition \ref{gens-canonical}). 

Finally, in Section \ref{sec:arf} we consider Arf good semigroups, which are a natural generalization of the concept of Arf numerical semigroup. We give an effective method to verify if a good semigroup has the Arf property (Proposition \ref{effective-Arf}). Then we prove that the Arf good closure of a good semigroup always exists (Corollary \ref{Arf-cl}) and, in the two dimensional case, we show how to calculate it.

The procedures presented here have been implemented in \texttt{GAP} \cite{gap} for good semigroups in $\mathbb N^2$, and will be part of the forthcoming stable release of the package \texttt{numericalsgps} \cite{numericalsgps}. The code is available in the development version of the package: \url{https://bitbucket.org/gap-system/numericalsgps} in the file \texttt{good-semigroups.gi} located in the folder \texttt{gap}. Also the functions related to good semigroups are documented in Chapter 12 of the manual in that version (folder \texttt{doc}).

\section{Good semigroups and their ideals}\label{sec:good-sem}

Let $\mathbb N$ be the set of nonnegative integers. As usual, $\le$ stands for the usual partial ordering in $\mathbb N^n$: $a\le b$ if $a_i\le b_i$ for any $i\in\{1,\dots,n\}$. 
Given $a,b\in \mathbb N^n$, the infimum of the set $\{a,b\}$ (with respect to $\le$) will be denoted by $a\wedge b$. Hence $a\wedge b=(\min(a_1,b_1),\dots,\min(a_n,b_n))$.

Let $S$ be a submonoid of $(\mathbb N^n,+)$. We say that $S$ is a \emph{good semigroup} if

\begin{enumerate}[({G}1)]
\item for all $a,b\in S$, $a\wedge b\in S$,
\item if $a,b\in S$ and $a_i=b_i$ for some $i\in \{1,\dots,n\}$, then there exists $c\in S$ such that $c_i>a_i=b_i$, $c_j\ge\min\{a_j,b_j \}$ for $j\in\{1,\dots,n\}\setminus\{i\}$ and $c_j=\min\{a_j,b_j \}$ if $a_j\ne b_j$,
\item there exists $\C\in S$ such that $\C+\mathbb N^n\subseteq S$.
\end{enumerate}

In light of \cite[Proposition 2.1]{a-u}, value semigroups of analitically unramified residually rational one dimensional semilocal rings are good semigroups.

Condition (G1) is denoted as Property A in \cite{garcia}, while
(G2) corresponds to Property B in that paper.

A \emph{relative ideal} of a good semigroup $S$ is a subset $\emptyset\ne E \subseteq \mathbb Z^n$ such that $E + S \subseteq E$ and $a+E\subseteq S$ for some $a\in S$. 
If $E$ satisfies (G1) and (G2), then we say that $E$ is a \emph{good relative ideal} of $S$ (condition (G3) follows from the definition of good relative ideal).

Notice that from condition (G1), if $\C_1$ and $\C_2$ fulfill (G3), then so does $\C_1\wedge \C_2$. So there exists a minimum $\C\in \mathbb N^n$ for which condition (G3) holds. Therefore we will say that
\[
\C(E):=\min\{a\in \mathbb Z^n\mid a+\mathbb N^n\subseteq E\}
\]
is the \emph{conductor} of $E$.
We denote $\gamma(E):=\C(E)-\textbf{1}$ and we abbreviate $\C:=\C(S)$ and $\gamma:=\gamma(S)$, when there is no possible confusion.

For every good relative ideal $E$ of a good semigroup $S$ we define the set of \emph{small elements} as
\[
\mathrm{Small}(E)=\{a\in E\mid a\le \C(E)\}.
\]
In particular, if $E=S$, we have
\[
\mathrm{Small}(S)=\{a\in S\mid a\le \C\}.
\]
Notice that $\C(E)\in\mathrm{Small}(E)$ for all $E$.

For planar curves with two branches $f=f_1 f_2$, if $v$ is the intersection multiplicity of $f_1$ and $f_2$, then $(v+c_1,v+c_2)$ is the conductor of the value semigroup of the curve, where $c_i$ is the conductor of the value semigroup of $f_i$, $i\in\{1,2\}$ (see for instance \cite[Th\'eor\`eme 1.2.6]{salem-th}).

There are several ways in the literature to obtain good semigroups, and each of them has its own membership procedures, and methods for computing their conductors. We see some of them in the two branches case in the following example.

\begin{example}\label{ex:two-good}
	
\noindent a) If we have a numerical semigroup $S$ and a relative ideal $E$ of $S$ with $E\subseteq S$,
then the \emph{semigroup duplication} $S\bowtie E$ is a good semigroup defined as:
\[
S\bowtie E= D\cup (E\times E)\cup\{ a\wedge b\mid a\in D, b\in E\times E\},
\]
where $D=\{(s,s) \mid s\in S\}$ (see \cite{danna}).
	
\begin{verbatim}
gap> s:=NumericalSemigroup(2,3);;
gap> e:=6+s;;
gap> dup:=SemigroupDuplication(s,e);
<Good semigroup>
gap> SmallElements(dup);
[ [ 0, 0 ], [ 2, 2 ], [ 3, 3 ], [ 4, 4 ], [ 5, 5 ], [ 6, 6 ],
[ 6, 7 ], [ 6, 8 ], [ 7, 6 ], [ 7, 7 ], [ 8, 6 ], [ 8, 8 ] ]
\end{verbatim}
	
\noindent b) For $S$ and $T$ numerical semigroups, $g:S\to T$ a monoid morphism (and thus multiplication by an integer) and $E$ a relative ideal of $T$ with $E\subseteq T$, then $S\bowtie^gE$ is also a good semigroup (here $D=\{(s,ks) \mid s\in S\}$):
\[
S\bowtie^g E=D\cup(g^{-1}(E)\times E)\cup \{a\wedge b\mid a\in D, b\in
g^{-1}(E)\times E\},
\]
called the \emph{amalgamation} of $S$ with $T$ along $E$ with
respect to $g$, \cite{danna}.
\begin{verbatim}
gap> t:=NumericalSemigroup(3,4);;
gap> e:=3+t;
gap> a:=Amalgamation(s,e,2);
<Good semigroup>
gap> SmallElements(a);
[ [ 0, 0 ], [ 2, 3 ], [ 2, 4 ], [ 3, 3 ], [ 3, 6 ], [ 3, 7 ],
[ 3, 8 ], [ 3, 9 ], [ 4, 3 ], [ 4, 6 ], [ 4, 7 ], [ 4, 8 ],
[ 5, 3 ], [ 5, 6 ], [ 5, 7 ], [ 5, 9 ] ]
\end{verbatim}
These examples are illustrated in Figure \ref{fig:duplication}.

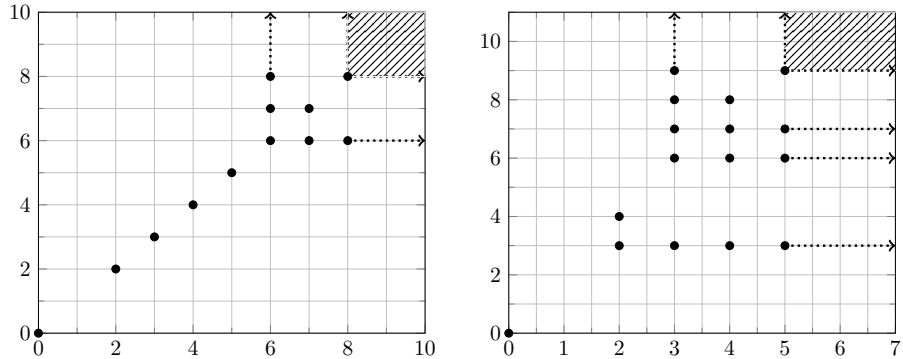
\begin{figure}[h]

\centering
\begin{tikzpicture}[scale=.75]
\begin{axis}[grid=both, minor tick num=1, ymin=0, ymax=10,xmin=0, xmax=10,ytick={0,2,...,10}]
\addplot [only marks] coordinates {
	(0,0)
	(2,2)
	(3,3)
	(4,4)
	(5,5)
	(6,6)
	(6,7)
	(6,8)
	(7,6)
	(7,7)
	(8,6)
	(8,8)};
\addplot [->, style=dotted, very thick] coordinates{
	(6,8)
	(6,10)};
\addplot [->, style=dotted, very thick] coordinates{
	(8,6)
	(10,6)};
\addplot [->, style=dotted, very thick] coordinates{
	(8,8)
	(8,10)};
\addplot [->, style=dotted, very thick] coordinates{
	(8,8)
	(10,8)};
\addplot [pattern = north east lines,  draw=white] coordinates{
	(8,8)
	(10,8)
	(10,10)
	(8,10)
	(8,8)};
\end{axis}
\end{tikzpicture}\quad
\begin{tikzpicture}[scale=.75]
\begin{axis}[grid=both, minor tick num=1, ymin=0, ymax=11, xmin=0, xmax=7,ytick={0,2,...,11}]
\addplot[only marks] coordinates{
	(0, 0)
	(2, 3)
	(2, 4)
	(3, 3)
	(3, 6)
	(3, 7)
	(3, 8)
	(3, 9)
	(4, 3)
	(4, 6)
	(4, 7)
	(4, 8)
	(5, 3)
	(5, 6)
	(5, 7)
	(5, 9)
	};
\addplot [pattern = north east lines,  draw=white] coordinates{
	(5,11)
	(5,9)
	(9,9)
	(9,11)
	(5,11)};
\addplot [->, style=dotted, very thick] coordinates{
	(5,9)
	(5,11)};
\addplot [->, style=dotted, very thick] coordinates{
	(5,9)
	(7,9)};
\addplot [->, style=dotted, very thick] coordinates{
	(3,9)
	(3,11)};
\addplot [->,style=dotted, very thick] coordinates{
	(5,7)
	(7,7)};
\addplot [->,style=dotted, very thick] coordinates{
	(5,6)
	(7,6)};
\addplot [->,style=dotted, very thick] coordinates{
	(5,3)
	(7,3)};
\end{axis}
\end{tikzpicture}
\caption{The good semigroups $\langle 2,3\rangle \bowtie (6+\langle 2,3\rangle)$ (left) and $\langle 2,3\rangle \bowtie^g (3+\langle 3,4\rangle)$ with $g$ multiplication by $2$ (right).}
\label{fig:duplication}
\end{figure}
	
\noindent c) According to \cite[Proposition 2.3]{a-u}, the direct product of two numerical semigroups is a good semigroup.

Let $S=\langle 3,5,7\rangle$ and $T=\langle 4,5\rangle$. Then $S\times T$ is a good semigroup.
	
\begin{verbatim}
gap> s:=NumericalSemigroup(3,5,7);;
gap> t:=NumericalSemigroup(4,5);;
gap> sms:=SmallElements(s);
[ 0, 3, 5 ]
gap> smt:=SmallElements(t);
[ 0, 4, 5, 8, 9, 10, 12 ]
gap> c:=Cartesian(sms,smt);;
gap> RepresentsSmallElementsOfGoodSemigroup(c);
true
\end{verbatim}
	
\noindent d) As we already mentioned, value semigroups are good semigroups. The value semigroup of the ring $\mathbb K[\![x,y]\!]/(y^4-2x^3y^2-4x^5y+x^6-x^7)(y^2-x^3)$ can be drawn as follows (see \cite[Figure 1]{apery-alg}).

\medskip
\begin{center}
\begin{tikzpicture}
\begin{axis}[grid=both]
\addplot[->, style=dotted, very thick] coordinates{
	(29,13)
	(35,13)};
\addplot[->, style=dotted, very thick] coordinates{
	(29,15)
	(35,15)};
\addplot[->, style=dotted, very thick] coordinates{
	(13,15)
	(13,18)};
\addplot[->, style=dotted, very thick] coordinates{
	(17,15)
	(17,18)};
\addplot[->, style=dotted, very thick] coordinates{
	(19,15)
	(19,18)};
\addplot[->, style=dotted, very thick] coordinates{
	(21,15)
	(21,18)};
\addplot[->, style=dotted, very thick] coordinates{
	(23,15)
	(23,18)};
\addplot[->, style=dotted, very thick] coordinates{
	(25,15)
	(25,18)};
\addplot[->, style=dotted, very thick] coordinates{
	(26,15)
	(26,18)};
\addplot[->, style=dotted, very thick] coordinates{
	(27,15)
	(27,18)};
\addplot[->, style=dotted, very thick] coordinates{
	(29,15)
	(29,18)};
\addplot [pattern = north east lines,  draw=white] coordinates{
	(29,15)
	(35,15)
	(35,18)
	(29,18)
	(29,15)};
\addplot[only marks, mark options={scale=.75,solid}] coordinates{
	(0,0)
	(4,2)
	(6,3)
	(8,4)
	(10,5)
	(12,6)
	(13,7)
	(13,8)
	(13,9)
	(13,10)
	(13,11)
	(13,12)
	(13,13)
	(13,14)
	(13,15)
	(14,7)
	(16,8)
	(17,9)
	(17,10)
	(17,11)
	(17,12)
	(17,13)
	(17,14)
	(17,15)
	(18,9)
	(19,10)
	(19,11)
	(19,12)
	(19,13)
	(19,14)
	(19,15)
	(20,10)
	(21,11)
	(21,12)
	(21,13)
	(21,14)
	(21,15)
	(22,11)
	(23,12)
	(23,13)
	(23,14)
	(23,15)
	(24,12)
	(25,13)
	(25,14)
	(25,15)
	(26,13)
	(26,14)
	(26,15)
	(27,13)
	(27,14)
	(27,15)
	(28,13)
	(28,14)
	(29,13)
	(29,15)};
\end{axis}
\end{tikzpicture}
\end{center}
The value semigroup of $\mathbb K[\![x,y]\!]/(y^4-2x^3y^2-4x^5y+x^6-x^7)$ is $S_1=\langle 4,6,13\rangle$, while that of $K[\![x,y]\!]/(y^2-x^3)$ is $S_2=\langle 2,3\rangle$.

Here the set of maximal elements mentioned in \cite{garcia} is
\begin{multline*}
M=\{ ( 0, 0 ), ( 4, 2 ), ( 6, 3 ), ( 8, 4 ), ( 10, 5 ), ( 12, 6 ), ( 14, 7 ),   ( 16, 8 ),\\ ( 18, 9 ), (20, 10 ), ( 24, 12 ), ( 22, 11 ), ( 28, 14 ) \},
\end{multline*}
and the good semigroup is the set of elements in $S_1\times S_2$ that are not above nor to the right an element of $M$ \cite[Theorem 6]{garcia}.
\begin{verbatim}
gap> s1:=NumericalSemigroup(4,6,13);;
gap> s2:=NumericalSemigroup(2,3);;
gap>M:=[[0,0],[4,2],[6,3],[8,4],[10,5],[12,6],[14,7],
>[16,8],[18,9],[20,10],[24,12],[22,11],[28,14]];;
gap> g:=GoodSemigrouByMaximalElements(s1,s2,M);
<Good semigroup>
\end{verbatim}

It is worth mentioning that semigroup duplication and
amalgamations can also be realized as value semigroups of
algebroid curves with two branches (see, for instance,
\cite[Section 2]{danna}).
\end{example}

From now on, we assume $S\subseteq \mathbb{N}^n$ to be a good semigroup and we denote $I=\{1,\dots,n\}$. Then
\[
H_J=\{(a_1,\dots,a_n)\in \mathbb Z^n\mid a_j\ge 0 \text{ for } j\in J, a_i=0 \text{ for } i\in I\setminus J\}
\]
is the ``semihyperplane'' in the nonnegative quadrant represented by the coordinates in $J$. In particular, when $J=\{j\}$, $H_J$ is the $j$-th semiaxis.
To make notations easier, when we make examples in the two branches case we call $OX=H_{\{1\}}$ and $OY=H_{\{2\}}$.
We also define, for every good relative ideal $E$ of $S$ and for every $J\subseteq I$, the \emph{$J$-border of $\mathrm{Small}(E)$} as
\[\partial_J(E)=\{a\in\mathrm{Small}(E)\mid a_j=\C(E)_j \text{ for all }j\in J\},\]
and the \emph{border} of $\mathrm{Small}(E)$ as \[\partial(E)=\bigcup_{J\subseteq I}\partial(E)_J.\]

Let $E$ a relative ideal of $S$, $a \in \mathbb Z^n$ and $J \subset I$.
Then we have the following technical definitions:
\begin{enumerate}[(1)]
	\item $\Delta_J (a):=\{b\in \mathbb Z^n \mid a_j=b_j \text{ for } j\in J \textup{ and } a_i<b_i \text{ for } i\in I\setminus J\}$,
	\item $\bar\Delta_J(a):=\{b\in \mathbb Z^n \mid a_j=b_j \text{ for } j\in J \textup{ and } a_i\le b_i \text{ for } i\in I\setminus J\}$,
	\item $\Delta_J^E (a) := \Delta_J (a) \cap E$,
	\item $\bar\Delta_J^E (a) := \bar\Delta_J (a) \cap E$,
	\item $\Delta(a):=\bigcup_{i\in I} \Delta_i (a)$, where $\Delta_i(a):=\Delta_{\{i\}}(a)$,
	\item $\bar\Delta(a):=\bigcup_{i\in I} \bar\Delta_i (a)$, where $\bar\Delta_i(a):=\bar\Delta_{\{i\}}(a)$,
	\item $\Delta^E (a) := \Delta (a) \cap E$,
	\item $\bar\Delta^E (a) := \bar\Delta (a) \cap E$.
\end{enumerate}
Notice that $H_J$ plays the dual role of $\bar\Delta_J$, meaning that 
\[
(a+H_J)=\{b\in\mathbb Z^n\mid b_j\ge a_j \text{ for } j\in J, b_i=a_i \text{ for }i\in I\setminus J\}=\bar\Delta_{I\setminus J}(a).
\]

The following Lemma has already been proved, in a slightly different way, in \cite[Lemma 4.1.11]{K-T-S}. We include it here for sake of completeness.
\begin{lemma}\label{lem:contains-border-axes}
Let $E$ be a good relative ideal and $a\in E$. If $a\in\partial_J(E)$ for some $J\subseteq I$, then $(a+H_J)\subseteq E$. 
\end{lemma}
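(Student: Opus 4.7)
The plan is to produce, for a given $b \in a + H_J$, a witness element of $E$ whose infimum with a suitable ``corner'' element of $E$ equals $b$, with the witness built by iteratively pushing $a$ upward in the $J$-directions via (G2). Before starting, I would reduce to the case where $J$ is maximal: replace $J$ by $J^* = \{k \in I : a_k = \C(E)_k\} \supseteq J$. Since $H_J \subseteq H_{J^*}$, it suffices to prove $a + H_{J^*} \subseteq E$, so I may assume from the outset that $a_i < \C(E)_i$ strictly for every $i \in I \setminus J$. This reduction is the technical trick that will make the (G2) iteration behave.

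Fix $b \in a + H_J$, so $b_j \ge a_j = \C(E)_j$ for $j \in J$ and $b_i = a_i$ for $i \notin J$. I construct inductively a sequence $a = a^{(0)}, a^{(1)}, \dots \in E$ with the invariants $a^{(\ell)}_j \ge \C(E)_j$ for $j \in J$ (monotonically non-decreasing in $\ell$) and $a^{(\ell)}_i = a_i$ for $i \notin J$. If $a^{(\ell)}_j \ge b_j$ for all $j \in J$, stop. Otherwise pick $j_0 \in J$ with $a^{(\ell)}_{j_0} < b_{j_0}$ and define $c^{(\ell)}$ by $c^{(\ell)}_k = a^{(\ell)}_k$ for $k \in J$ and $c^{(\ell)}_i = \C(E)_i$ for $i \notin J$. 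By the invariants, $c^{(\ell)} \ge \C(E)$, so $c^{(\ell)} \in E$ by (G3). Since $a^{(\ell)}$ and $c^{(\ell)}$ agree on coordinate $j_0$, (G2) applied to them yields $a^{(\ell+1)} \in E$ with $a^{(\ell+1)}_{j_0} > a^{(\ell)}_{j_0}$, with $a^{(\ell+1)}_k \ge a^{(\ell)}_k$ for $k \in J \setminus \{j_0\}$ (their components coincide there), and with $a^{(\ell+1)}_i = \min(a_i, \C(E)_i) = a_i$ for $i \notin J$ (their components strictly differ there thanks to the reduction). Both invariants are preserved, and each step strictly increases an integer coordinate in $J$, bounded once we stop by the corresponding $b_j$; iterating over the (finitely many) indices of $J$ in turn, the procedure terminates at some $a^{(K)} \in E$ with $a^{(K)}_j \ge b_j$ for all $j \in J$ and $a^{(K)}_i = a_i$ for all $i \notin J$.

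To conclude, set $c'_j = b_j$ for $j \in J$ and $c'_i = \C(E)_i$ for $i \notin J$; then $c' \ge \C(E)$, hence $c' \in E$ by (G3). Coordinate-wise, $\min(a^{(K)}_j, b_j) = b_j$ for $j \in J$ and $\min(a_i, \C(E)_i) = a_i = b_i$ for $i \notin J$, so $a^{(K)} \wedge c' = b$, and (G1) for the good relative ideal $E$ gives $b \in E$.

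The main obstacle is controlling the coordinates outside $J$ during the iteration: property (G2) only pins components where the two chosen elements strictly differ, so without the preliminary enlargement $J \mapsto J^*$, the iterates $a^{(\ell)}$ could drift upward at some $i \notin J$ with $a_i = \C(E)_i$, breaking the invariant $a^{(\ell)}_i = a_i$ and spoiling the final infimum. This reduction is what makes the scheme rigid enough for (G2) to do the work.
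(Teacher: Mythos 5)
Your proof is correct and follows essentially the same strategy as the paper's: iterate (G2) against an auxiliary element lying above the conductor to push the $J$-coordinates of $a$ past those of $b$ while pinning the remaining coordinates, then recover $b$ as an infimum with a second element above the conductor via (G1). The only technical difference is that the paper takes the auxiliary element strictly above $\max\{\C(E)_i,a_i\}$ in the coordinates outside $J$, which makes your preliminary enlargement $J\mapsto J^*$ unnecessary, whereas your choice of $c^{(\ell)}_i=\C(E)_i$ genuinely requires that reduction; both work.
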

\begin{proof}
Let $c \in (a+H_J)$. Then, by the definition of $H_J$ and $\partial_J(E)$:
\begin{align*}
c_j \ge&  a_j=\C(E)_j \text{ for all } j \in J, \qquad\qquad \\
c_i = & a_i  \text{ for all } i \in I \setminus J.
\end{align*}
Let now $b\in\mathbb Z^n$ be such that
\begin{align*}
b_j =& a_j \text{ for all } j \in J, \\
b_i >& \max\{\C(E)_i, a_i \}  \text{ for all } i \in I \setminus J.
\end{align*}
Then $b \geq \C(E)$, which implies $b \in E$.
Now applying property (G2) to $a$ and $b$ we obtain, for any $j \in J$, an element $a' \in E$ with $a' \geq a + \textbf{e}_j$, where $\textbf e_i=(0,\dots,\underset{i}{1},\dots,0)$. Therefore, repeating the process substituting $a$ with $a'$ and taking again a $b$ with the above properties, we obtain an element $\overline a$ such that 
\begin{align*}
\overline a_j >& a^{(n)}_j\ge a_j \text{ for all } j \in J, \\ 
\overline a_i =& b_i\wedge\max\{\C(E)_i, a_i \}\}=\max\{\C(E)_i, a_i\}\ge c_i \text{ for all } i \in I \setminus J.
\end{align*}
For $n$ big enough, we can suppose $\overline a\ge c$.
	
Pick now $d \in \mathbb Z^n$ such that
\begin{align*}
d_j =& c_j\ge \C(E)_j \text{ for all } j \in J, \\
d_i >& \max\{\C(E)_i, c_i\} \text{ for all }i \in I \setminus J.
\end{align*}
In particular, $d \geq \C(E)$, and hence $d \in E$. 
Thus, $c=d\wedge{a}\in E$ since $E$ satisfies (G1).
\end{proof}

Once we know $\C(E)$ and $\mathrm{Small}(E)$ we can easily check membership to $E$.
\begin{proposition}\label{prop:membership}
Let $a\in\mathbb N^n$. Then $a\in E$ if and only if $a\wedge \C(E)\in\mathrm{Small}(E)$.
\end{proposition}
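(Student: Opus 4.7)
The plan is to prove the two directions separately, with the forward implication being essentially immediate and the backward implication reducing to Lemma \ref{lem:contains-border-axes}.

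For the forward direction, suppose $a \in E$. Since $\C(E) + \mathbb N^n \subseteq E$, we have $\C(E) \in E$. Property (G1) for $E$ then yields $a \wedge \C(E) \in E$. Because $a \wedge \C(E) \le \C(E)$ by definition of the infimum, we conclude $a \wedge \C(E) \in \mathrm{Small}(E)$.

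For the backward direction, assume $a \wedge \C(E) \in \mathrm{Small}(E) \subseteq E$. Define
\[
J = \{\, j \in I \mid a_j \ge \C(E)_j \,\}.
\]
Then $(a \wedge \C(E))_j = \C(E)_j$ for every $j \in J$, so $a \wedge \C(E) \in \partial_J(E)$. By Lemma \ref{lem:contains-border-axes}, it follows that
\[
(a \wedge \C(E)) + H_J \subseteq E.
\]
Now I would verify that $a$ itself lies in this translate: for $j \in J$, the $j$-th coordinate of $a - (a \wedge \C(E))$ equals $a_j - \C(E)_j \ge 0$, and for $i \in I \setminus J$, the $i$-th coordinate equals $a_i - a_i = 0$. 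Hence $a - (a \wedge \C(E)) \in H_J$, giving $a \in E$.

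There is no real obstacle here; the statement is essentially a packaging of Lemma \ref{lem:contains-border-axes} together with the infimum property (G1). The only point requiring mild care is recognizing that the set $J$ of coordinates in which $a$ dominates the conductor is precisely the set of coordinates on which $a \wedge \C(E)$ hits the border, so that the lemma can be applied with this choice of $J$.
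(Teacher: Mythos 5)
Your proof is correct and follows essentially the same route as the paper: the nontrivial direction is reduced to Lemma \ref{lem:contains-border-axes} by observing that $b = a\wedge\C(E)$ lies in $\partial_J(E)$ and that $a \in b + H_J$. The only cosmetic differences are that you spell out the forward direction (which the paper leaves implicit) and treat all cases uniformly via the explicit set $J$ rather than the paper's three-way case split.
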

\begin{proof}
If $a\ge\C(E)$, then clearly $a\in E$, by definition of conductor. On the other hand, if $a\le\C(E)$ then $a=a\wedge \C(E)\in\mathrm{Small}(E)$ implies $a\in E$.
If none of the two is the case, then let $b=a\wedge \C(E)$. Then $b\in\partial_J(E)$ for some $J\subseteq I$ and $a\in (b+H_J)$. By Lemma \ref{lem:contains-border-axes}, this gives $a\in E$.
\end{proof}

From the last result it easily follows the essentially well-known fact that a good semigroup is fully determined by its small elements.
\begin{corollary}\label{smallelements}
Let $S$ and $S^\prime$ be two good semigroups. Then $S=S^\prime$ if and only if $\mathrm{Small}(S)=\mathrm{Small}(S^\prime)$.
\end{corollary}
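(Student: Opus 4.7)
The forward implication is immediate from the definition: if $S=S'$ as subsets of $\mathbb N^n$, then the conductors coincide (each conductor is defined intrinsically from the set), so $\mathrm{Small}(S)=\mathrm{Small}(S')$. The real content is the converse, and the plan is to reduce it to Proposition \ref{prop:membership}.

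The first step is to observe that the conductor $\C(S)$ can be recovered from $\mathrm{Small}(S)$ alone. Indeed, $\C(S)\in\mathrm{Small}(S)$ by definition, and every $a\in\mathrm{Small}(S)$ satisfies $a\le \C(S)$; hence $\C(S)$ is the unique maximum element of $\mathrm{Small}(S)$ with respect to the coordinate-wise ordering. Consequently, the hypothesis $\mathrm{Small}(S)=\mathrm{Small}(S')$ forces $\C(S)=\C(S')$; call this common value $\C$.

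The second step is to apply Proposition \ref{prop:membership} to both semigroups with the same conductor $\C$. For any $a\in\mathbb N^n$,
\[
a\in S \iff a\wedge \C\in \mathrm{Small}(S)\iff a\wedge \C\in\mathrm{Small}(S') \iff a\in S',
\]
where the outer equivalences are Proposition \ref{prop:membership} and the middle equivalence is the hypothesis. This shows $S=S'$ as subsets of $\mathbb N^n$.

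There is no real obstacle here: the entire argument is a direct corollary of Proposition \ref{prop:membership}, once one notices that the conductor itself is encoded in the small elements as their (unique) coordinate-wise maximum. The only subtlety worth highlighting is the well-definedness of this maximum, which is guaranteed because $\C(S)\in\mathrm{Small}(S)$ dominates all other small elements by construction.
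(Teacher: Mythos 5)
Your proof is correct and follows the same route the paper intends: the corollary is stated as an immediate consequence of Proposition \ref{prop:membership}, and your argument is exactly that application, with the additional (worthwhile) observation that $\C(S)$ is recoverable as the unique maximum of $\mathrm{Small}(S)$, so the two membership tests really do use the same conductor.
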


In the same way, fixed a semigroup $S$, its good relative ideals are fully determined by their small elements.

\begin{corollary}\label{smallelementsforideals}
Let $S$ be a good semigroup. Let $E$ and $E'$ be two good relative ideals of $S$ with $\C(E)=\C(E')$. Then $E=E'$ if and only if $\mathrm{Small}(E)=\mathrm{Small}(E^\prime)$.
\end{corollary}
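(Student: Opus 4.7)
The plan is to mimic the proof of Corollary \ref{smallelements}, reducing everything to the membership criterion of Proposition \ref{prop:membership} but now applied simultaneously to two ideals sharing a conductor. The forward implication is immediate from the definition $\mathrm{Small}(E)=\{a\in E\mid a\le \C(E)\}$, since this set depends only on $E$ and $\C(E)$. So the real content is the converse: assuming $\C(E)=\C(E')$ and $\mathrm{Small}(E)=\mathrm{Small}(E')$, I would prove $E\subseteq E'$ (the reverse inclusion then follows by symmetry).

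Given $a\in E$, I would set $b:=a\wedge \C(E)=a\wedge \C(E')$ and split into the same three cases that appear in the proof of Proposition \ref{prop:membership}. If $a\ge \C(E)=\C(E')$, then $a\in E'$ by property (G3). If $a\le \C(E)$, then $b=a$ belongs to $\mathrm{Small}(E)=\mathrm{Small}(E')$, so $a\in E'$. In the remaining case there exists a proper nonempty $J\subsetneq I$ with $b\in \partial_J(E)$ and $a\in b+H_J$; from $\mathrm{Small}(E)=\mathrm{Small}(E')$ and the equality $\C(E)_j=\C(E')_j$ for $j\in J$, one deduces $b\in \partial_J(E')$, and Lemma \ref{lem:contains-border-axes} applied to $E'$ then gives $b+H_J\subseteq E'$, whence $a\in E'$.

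The step requiring the most care is checking that this case analysis, originally phrased in Proposition \ref{prop:membership} for $a\in \mathbb N^n$, extends to arbitrary $a\in \mathbb Z^n$, since good relative ideals may contain elements with negative coordinates. I expect this to be essentially cost-free: the operations $a\wedge \C(E)$, the strata $\partial_J(E)$, and the translates $b+H_J$ all make sense unchanged in $\mathbb Z^n$, and Lemma \ref{lem:contains-border-axes} is already stated and proved for an arbitrary good relative ideal. If one prefers not to rely on such an extension, an alternative is to translate by a large enough element $s\in S$ with $s+a\in \mathbb N^n$, apply Proposition \ref{prop:membership} to $s+a\in E$ and to $E'$ (using that $\mathrm{Small}$ and $\C$ are compatible with translations by elements of $S$ shared by both ideals), and then shift back.
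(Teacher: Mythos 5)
Your proposal is correct and follows exactly the route the paper intends: the paper states this corollary without proof as an immediate consequence of Proposition \ref{prop:membership} (equivalently, of the case analysis via Lemma \ref{lem:contains-border-axes}), which is precisely your argument. Your extra care about extending the membership test from $\mathbb N^n$ to $\mathbb Z^n$ is a reasonable point the paper glosses over, and your observation that it is cost-free is accurate since Lemma \ref{lem:contains-border-axes} is already proved for arbitrary good relative ideals.
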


Summarizing, for a good relative ideal $E$ of $S$ we get:
\[
E=\mathrm{Small}(E)\cup(\C(E)+\mathbb N^n)\cup \left(\bigcup\nolimits_{a\in\partial_J(E), J\subseteq I}(a+H_J)\right),
\]
and the same of course holds for the semigroup $S$.
Notice that this notation is redundant, since if $J^\prime\subseteq J\subseteq I$ and $a\in \partial_J(E)$, then $a\in \partial_{J^\prime}(E)$.

\section{Good generating systems for semigroups}\label{sec:good-gen-sys}

We now concentrate on good semigroups. We will analyse good relative ideals in the next section.
First of all, observe that intersection of good semigroups might fail to be good, as the following example shows.
\begin{example}\label{ex-not-so-good}
Let $S$ and $T$ be two good semigroups with conductor $\C=(5,5)$ and with  $\mathrm{Small}(S)=\{(0,0),(3,3),(3,4),(4,3)\}$ and $\mathrm{Small}(T)=\{(0,0),(3,3),(3,5),(4,3)\}$. Then  $S\cap T$ is not a good semigroup since condition (G2) does not hold.
\end{example}

This has bad consequences: for a subset $G$ of $\mathbb N^n$ we
cannot think about the intersection of all good semigroups
containing $G$ as the good semigroup generated by $G$. Hence we
have to look for a convenient alternative.

Good semigroups are submonoids of $\mathbb N^n$, and so they have
a unique minimal generating system: if $S$ is a submonoid of
$\mathbb N^n$, then its minimal generating system is $S^*\setminus
(S^*+S^*)$, with $S^*=S\setminus\{(0,0)\}$ (see, for instance,
\cite{fg}).
Unfortunately, good semigroups are not in general
affine semigroups, that is, they are not finitely generated as
monoids. For instance for $S=\langle 2,3\rangle\bowtie (6+\langle
2,3\rangle)$ in Example \ref{ex:two-good}, we need $(8,6)+OX$ to
be in the minimal generating set of $S$.

We know that a good semigroup $S$ is fully determined by
$\mathrm{Small}(S)$, which is a finite set. Among the elements of
$\mathrm{Small}(S)$, some might be linear combinations or infima
of others, and so in many cases we could choose a smallest subset
of $\mathrm{Small}(S)$ that still describes uniquely $S$.  To this
end, for $G\subseteq\mathbb N^n$, define $[G]$ to be the smallest
submonoid of $\mathbb N^n$ containing $G$ that is closed under addition and
infima.

For a subset $A$ of a monoid $M$, we denote by
\[
\langle A\rangle=\{a_1+\cdots + a_m\mid m\in \mathbb N, a_1,\dots,a_m\in A\}
\]
the submonoid of $M$ generated by $A$.

\begin{proposition}\label{prop:dist}
Let $G$ be a subset of $\mathbb N^n$. Then
\[
[G]=\{g_1\wedge\dots\wedge g_m\mid m\in\mathbb N, g_1,\ldots, g_m\in \langle G\rangle\}.
\]
\end{proposition}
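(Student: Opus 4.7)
The plan is to prove the stated equality by a two-sided inclusion. Let $T$ denote the right-hand side
\[
T=\{g_1\wedge\dots\wedge g_m\mid m\in\mathbb N,\ g_1,\ldots, g_m\in \langle G\rangle\}.
\]
The inclusion $T\subseteq [G]$ is almost immediate: since $[G]$ is a submonoid of $\mathbb N^n$ containing $G$, it contains $\langle G\rangle$, and since $[G]$ is closed under infima, any finite infimum of elements of $\langle G\rangle$ lies in $[G]$.

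For the reverse inclusion $[G]\subseteq T$, I would prove that $T$ itself is a submonoid of $\mathbb N^n$ containing $G$ and closed under infima. The minimality of $[G]$ then forces $[G]\subseteq T$. Three properties need to be verified. First, $G\subseteq T$ by taking $m=1$ with $g_1\in G\subseteq \langle G\rangle$, and $0\in T$ since $0$ is the empty sum in $\langle G\rangle$. Second, closure under infima is formal: given $a=g_1\wedge\cdots\wedge g_m$ and $b=h_1\wedge\cdots\wedge h_k$ with the $g_i,h_j\in\langle G\rangle$, associativity and commutativity of $\wedge$ give
\[
a\wedge b=g_1\wedge\cdots\wedge g_m\wedge h_1\wedge\cdots\wedge h_k\in T.
\]

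The main step, and really the only content of the proposition, is closure of $T$ under addition. For this I would invoke the distributivity of addition over infimum in $\mathbb N^n$, namely $x+(y\wedge z)=(x+y)\wedge(x+z)$, which holds coordinatewise since $x_i+\min(y_i,z_i)=\min(x_i+y_i,x_i+z_i)$. Iterating this identity (an easy induction on $m+k$), for any $g_1,\ldots,g_m,h_1,\ldots,h_k\in\langle G\rangle$ one obtains
\[
(g_1\wedge\cdots\wedge g_m)+(h_1\wedge\cdots\wedge h_k)=\bigwedge_{1\le i\le m,\,1\le j\le k}(g_i+h_j).
\]
Since $\langle G\rangle$ is a submonoid of $\mathbb N^n$, each $g_i+h_j\in\langle G\rangle$, so the right-hand side belongs to $T$. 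This is the step that carries the proof; everything else is bookkeeping. Combining the two inclusions yields $[G]=T$.
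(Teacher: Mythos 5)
Your proof is correct and follows essentially the same route as the paper: the paper's (terser) argument likewise rests on the distributivity $(a\wedge b)+c=(a+c)\wedge(b+c)$ to show the right-hand side is closed under addition, together with the observation that $[G]$ is the smallest submonoid containing $G$ closed under infima. Your write-up merely makes explicit the two inclusions and the iterated distributivity identity that the paper leaves to the reader.
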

\begin{proof}
Observe that $+$ distributes with respect to $\wedge$, that is, for $a,b,c\in \mathbb N^n$,
\begin{equation}\label{eq:associative}
(a\wedge b)+c=(a+c)\wedge(b+c).
\end{equation}
The proof now follows from  \eqref{eq:associative} and the fact
that intersections of submonoids of $(\mathbb N^n,+)$ closed under
infima are  again submonoids of $(\mathbb N^n,+)$ closed under
infima.
\end{proof}

Notice that for every set $A=\{a_1,\dots,a_m\}\subset \mathbb N^n$ the infimum of $A$, i.e.~$a_1\wedge\dots\wedge a_m$, is the infima of at most $n$ elements of $A$. Thus, we get the following consequence.

\begin{corollary}\label{cor:dist}
Let $G\subseteq\mathbb N^n$. Then 
\[
[G]=\{g_1\wedge\dots\wedge g_n\mid g_i\in \langle G\rangle\}.
\]
\end{corollary}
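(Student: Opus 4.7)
My plan is to show that Proposition \ref{prop:dist} can be sharpened by cutting the length of any infimum representation down to at most $n$ terms, and then padding by repetition to achieve exactly $n$. The inclusion $\{g_1\wedge\dots\wedge g_n\mid g_i\in\langle G\rangle\}\subseteq [G]$ is immediate from Proposition \ref{prop:dist}, so the content is in the reverse inclusion.

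For the reverse inclusion, the plan is to start with an arbitrary $x\in[G]$. By Proposition \ref{prop:dist}, one may write $x=g_1\wedge\cdots\wedge g_m$ for some $m\in\mathbb N$ and $g_1,\dots,g_m\in\langle G\rangle$. The key combinatorial observation (already indicated in the paragraph preceding the statement) is that for each coordinate $i\in\{1,\dots,n\}$ the $i$-th component of $x$ is $\min_{k=1}^m (g_k)_i$, and so there is an index $j(i)\in\{1,\dots,m\}$ realizing this minimum. Then
\[
x=g_{j(1)}\wedge g_{j(2)}\wedge\cdots\wedge g_{j(n)},
\]
because each coordinate of the right-hand side matches that of $x$ by construction.

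The only remaining technicality is that the indices $j(1),\dots,j(n)$ need not be distinct, so at first glance we only obtain $x$ as an infimum of \emph{at most} $n$ elements of $\langle G\rangle$. This is harmless: since $a\wedge a=a$, we can freely repeat any one of the $g_{j(i)}$ until we have exactly $n$ terms (if $m<n$ originally, we simply repeat one of the $g_k$ in the same way). This places $x$ in the claimed set.

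I do not expect any real obstacle; the statement is essentially a restatement of Proposition \ref{prop:dist} combined with the coordinatewise nature of $\wedge$. The only point to be slightly careful about is the padding argument so that exactly $n$ terms appear in the infimum, which is why I would phrase the proof as ``at most $n$, hence exactly $n$ after repetition'' rather than try to force distinctness of the chosen indices.
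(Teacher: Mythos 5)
Your argument is correct and is essentially the paper's own: the paper derives the corollary from Proposition \ref{prop:dist} together with the remark immediately preceding the statement, namely that an infimum of $m$ elements of $\mathbb N^n$ equals the infimum of at most $n$ of them (one witness per coordinate), which is exactly your index-selection step. Your explicit padding via $a\wedge a=a$ to reach exactly $n$ terms is a detail the paper leaves implicit, but it is the same proof.
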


Given a $d\in \mathbb N^n$, we define 
\[
[G]_d:=d\wedge[G]
\]
and
\[
\mathrm{B}(d)=d\wedge\mathbb N^n=\{a\in\mathbb N^n\mid a\le d\}.
\]

As we mentioned above, we are mostly interested in when $[G]$ covers completely $\mathrm{Small}(S)$ for some $G\subseteq S$. 

If $\C$ is the conductor of $S$, we have $\mathrm{Small}(S)=\C\wedge S$ by definition of $\mathrm{Small}(S)$.
We therefore say that $G$ is a \emph{good generating system} for $S$ if 
\[
[G]_\C=\C\wedge [G]=\mathrm{Small}(S).
\]
We will say that $G$ is \emph{minimal} if no proper subset of $G$ is a good generating system of $S$.

Observe that $[G]_\C=[\C\wedge G]_\C$, thus we will always assume that good systems of generators are contained in $\mathrm B(\C)$.

Clearly $[\mathrm{Small}(S)]_\C=\mathrm{Small}(S)$, so that $\mathrm{Small}(S)$ is always a good generating system. However, it doesn't need to be minimal. We would like, when possible, to remove redundant elements of $\mathrm{Small}(S)$. The following trivial result is the first natural filter.
\begin{lemma}\label{lemma:combination}
Let $S$ be a good semigroup. Let $G$ be a good generating system for $S$ and let $a\in G$. If $a\in[G\setminus\{a\}]_\C$, then $G\setminus\{a\}$ is a good generating system for $S$.
\end{lemma}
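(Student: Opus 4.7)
The plan is to show $[G\setminus\{a\}]_\C=\mathrm{Small}(S)$. The inclusion $[G\setminus\{a\}]_\C\subseteq[G]_\C=\mathrm{Small}(S)$ is immediate from $G\setminus\{a\}\subseteq G$, so the work is in the reverse inclusion. I would take an arbitrary $s\in\mathrm{Small}(S)=[G]_\C$ and, by Proposition \ref{prop:dist}, write $s=\C\wedge g_1\wedge\cdots\wedge g_m$ with each $g_i\in\langle G\rangle$. I then split each generator as $g_i=k_i\,a+g_i^\prime$ with $k_i\in\mathbb N$ and $g_i^\prime\in\langle G\setminus\{a\}\rangle$, so as to isolate the contribution of $a$.

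The hypothesis $a\in[G\setminus\{a\}]_\C$ gives some $x\in[G\setminus\{a\}]$ with $a=\C\wedge x$. The key technical step is the \emph{substitution identity}
\[
\C\wedge(k\,a+g^\prime)=\C\wedge(k\,x+g^\prime)\qquad\text{for all }k\in\mathbb N,\ g^\prime\in\mathbb N^n.
\]
For $k=1$ this is a one-line consequence of \eqref{eq:associative}, since $\C\wedge(a+g^\prime)=\C\wedge(\C+g^\prime)\wedge(x+g^\prime)=\C\wedge(x+g^\prime)$, using $\C\le \C+g^\prime$. For general $k$ I would expand $k\,a=k(\C\wedge x)$ by iterated use of \eqref{eq:associative}, which rewrites $k\,a$ as the infimum of the terms $i\,\C+(k-i)x$ for $0\le i\le k$; after adding $g^\prime$ and intersecting with $\C$, every term with $i\ge 1$ is absorbed (its $\C$-component makes it $\ge\C$), leaving only $\C\wedge(k\,x+g^\prime)$.

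Once the substitution identity is in hand, using idempotence and commutativity of $\wedge$ I would rewrite
\[
s=\C\wedge g_1\wedge\cdots\wedge g_m=(\C\wedge g_1)\wedge\cdots\wedge(\C\wedge g_m)=\C\wedge h_1\wedge\cdots\wedge h_m,
\]
where $h_i:=k_i\,x+g_i^\prime$. Since $x\in[G\setminus\{a\}]$ and $g_i^\prime\in\langle G\setminus\{a\}\rangle\subseteq[G\setminus\{a\}]$, and $[G\setminus\{a\}]$ is by definition closed under addition and infima, the element $h_1\wedge\cdots\wedge h_m$ lies in $[G\setminus\{a\}]$, whence $s\in \C\wedge[G\setminus\{a\}]=[G\setminus\{a\}]_\C$. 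The only real obstacle is verifying the substitution identity cleanly — it is a combinatorial expansion using distributivity of $+$ over $\wedge$, and while the idea is simple, one has to be careful to show that no term outside the ``$i=0$ slice'' survives the infimum with $\C$. Everything else is formal manipulation inside the semilattice $(\mathbb N^n,+,\wedge)$.
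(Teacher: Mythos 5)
Your proof is correct. For this lemma the paper offers no argument at all --- it is announced as ``the following trivial result'' and left unproved --- so there is nothing to compare line by line; what you have done is supply the verification the authors omitted, and it is the right one. The only genuinely non-obvious point is exactly the one you isolate: since $a=\C\wedge x$ is an infimum involving $\C$, and $\C$ need not lie in $[G\setminus\{a\}]$, one cannot simply replace $a$ by $x$ inside $[G\setminus\{a\}]$; the substitution has to happen \emph{after} meeting with $\C$. Your identity $\C\wedge(k\,a+g')=\C\wedge(k\,x+g')$ handles this cleanly, and the two steps it rests on both check out: coordinatewise, $k(\C\wedge x)=\bigwedge_{i=0}^{k}\bigl(i\,\C+(k-i)x\bigr)$ because the linear function $i\mapsto i\C_j+(k-i)x_j$ attains its minimum at an endpoint, and every term with $i\ge 1$ satisfies $i\,\C+(k-i)x+g'\ge\C$ (all quantities being in $\mathbb N^n$), so it is absorbed by the outer $\wedge\,\C$. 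The remaining manipulations (idempotence of $\wedge$, closure of $[G\setminus\{a\}]$ under $+$ and $\wedge$) are formal, and the easy inclusion $[G\setminus\{a\}]_\C\subseteq[G]_\C=\mathrm{Small}(S)$ gives equality. No gaps.
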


Given a good system of generators, it is helpful to have a procedure to determine if an element is in the semigroup. We describe a method in the following two lemmas. 

\begin{lemma}\label{membership-semiring}
Let $G\subseteq\mathbb N^n$ and $a\in\mathbb N^n$. Then $a\in[G]$ if and only if $\bar\Delta_i(a)\cap\langle G\rangle\not=\emptyset$ for all $i\in I$. 
\end{lemma}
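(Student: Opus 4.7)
The plan is to prove both implications by leveraging Corollary~\ref{cor:dist}, which already gives the explicit description $[G]=\{g_1\wedge\dots\wedge g_n\mid g_i\in\langle G\rangle\}$. The point of the lemma is to replace this ``free choice'' of $n$ elements by $n$ \emph{coordinate-specific} witnesses living in the strips $\bar\Delta_i(a)$.

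For the forward implication, suppose $a\in[G]$. By Corollary~\ref{cor:dist} there exist $g_1,\dots,g_n\in\langle G\rangle$ with $a=g_1\wedge\cdots\wedge g_n$. Fix $i\in I$. Since $a_i=\min_k g_{k,i}$, some index $k(i)$ realizes $g_{k(i),i}=a_i$. For every $j\ne i$ we have $g_{k(i),j}\ge \min_\ell g_{\ell,j}=a_j$, so $g_{k(i)}\in\bar\Delta_i(a)\cap\langle G\rangle$. This yields the required nonempty intersection in each coordinate.

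For the reverse implication, choose for each $i\in I$ an element $h_i\in\bar\Delta_i(a)\cap\langle G\rangle$; by definition of $\bar\Delta_i(a)$, we have $h_{i,i}=a_i$ and $h_{i,j}\ge a_j$ for $j\ne i$. Form $h=h_1\wedge\cdots\wedge h_n$. For any coordinate $i$, the value $h_i=\min_k h_{k,i}$ equals $a_i$ because the term $k=i$ contributes exactly $a_i$ while every other term contributes at least $a_i$. Hence $h=a$, and Corollary~\ref{cor:dist} (applied to $n$ elements of $\langle G\rangle$) gives $a\in[G]$.

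I do not expect any serious obstacle here: both directions reduce to the elementary observation that infima are realized coordinatewise, together with Corollary~\ref{cor:dist}. The only minor subtlety is to notice, in the reverse direction, that the off-diagonal coordinates of each $h_k$ automatically dominate $a$ (so they do not ``spoil'' the infimum), which is exactly why the definition of $\bar\Delta_i(a)$ uses the inequality $a_j\le b_j$ for $j\ne i$ rather than equality.
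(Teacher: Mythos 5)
Your proof is correct and follows essentially the same route as the paper's: both directions rest on Corollary~\ref{cor:dist} together with the observation that coordinatewise infima are realized by one of the terms. The only difference is cosmetic — the paper separates the case $a\in\langle G\rangle$ while your argument handles it uniformly.
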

\begin{proof}
Suppose $a\in[G]$. If $a\in\langle G\rangle$, then $a\in\bar\Delta_i(a)\cap\langle G\rangle$ for all $i\in I$.
So let us suppose $a\in[G]\setminus\langle G\rangle$. By Corollary \ref{cor:dist} we know that $a\in[G]$ if and only if it is the minimum of $n$ elements of $\langle G\rangle$, $\{b^{(i)}\}_{i\in I}$, and since we are assuming $a\not\in \langle G\rangle$ we have $a\not=b^{(i)}$ for all $i\in I$. Since $a$ is the minimum of the elements $b$, we have $a_j\le b^{(i)}_j$ for all $j\in I$. Moreover, for any $j\in I$ there exists an $i_j\in I$ such that $a_j=b^{(i_j)}_j$. This means that for each $i\in I$ there is an index $i_j$ with $b^{(i_j)}$ belonging to $\bar\Delta_i(a)$. In particular, for any $i\in I$, $b^{(i_j)}\in \bar\Delta_i(a)\cap \langle G\rangle$. 
	
For the converse, suppose $\bar\Delta_i(a)\cap\langle G\rangle\not=\emptyset$ for all $i\in I$. If $a\in \langle G\rangle$, then trivially $a\in [G]$. Hence assume that $a\not\in \langle G\rangle$, and let for each $i\in I$, $b^{(i)}\in \bar\Delta_i(a)\cap\langle G\rangle$. Then clearly $a=~\wedge_{i\in I} b^{(i)}\in [G]$.
\end{proof}

We can be more precise if we substitute $[G]$ with $[G]_d$ for some $d$.
\begin{lemma}\label{membership-good-system}
Let $d\in\mathbb N^n$ and $G\subseteq \mathrm B(d)$. Let $a\in \mathrm B(d)\setminus\{d\}$ and let $J$ be maximal (with respect to inclusion) with the property $d\in \Delta_J(a)$ ($J$ can also be empty). Then $a\in [G]_d$ if and only if $\bar\Delta_i(a)\cap\langle G\rangle\not=\emptyset$ for all $i\in I\setminus J$.
\end{lemma}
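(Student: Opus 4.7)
The plan is to reduce the claim to Lemma \ref{membership-semiring} by unpacking what the condition $a \in [G]_d = d \wedge [G]$ really means. First I would observe that the maximality of $J$ (together with $a \le d$) gives exactly $J = \{j \in I : d_j = a_j\}$ and $d_i > a_i$ for $i \in I \setminus J$. Consequently, $a = d \wedge x$ for some $x \in [G]$ if and only if $x_j \ge a_j$ for $j \in J$ and $x_i = a_i$ for $i \in I \setminus J$; equivalently, $\bar\Delta_{I\setminus J}(a) \cap [G] \ne \emptyset$. So the statement to be proved becomes
\[
\bar\Delta_{I\setminus J}(a) \cap [G] \ne \emptyset \iff \bar\Delta_i(a) \cap \langle G\rangle \ne \emptyset \text{ for all } i \in I\setminus J.
\]

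For the forward direction, pick $x \in [G]$ with $a = d \wedge x$. Applying Lemma \ref{membership-semiring} to $x$ gives, for every $i \in I$, an element $b^{(i)} \in \bar\Delta_i(x) \cap \langle G\rangle$, i.e.\ $b^{(i)}_i = x_i$ and $b^{(i)}_k \ge x_k$ for $k \ne i$. For $i \in I \setminus J$ we have $x_i = a_i$ (forced by $a_i < d_i$ and $a_i = \min(d_i, x_i)$), and moreover $x \ge a$ since $a = d\wedge x$. Hence $b^{(i)}_i = a_i$ and $b^{(i)}_k \ge a_k$ for $k \ne i$, so $b^{(i)} \in \bar\Delta_i(a) \cap \langle G\rangle$.

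For the backward direction, for each $i \in I \setminus J$ pick $b^{(i)} \in \bar\Delta_i(a) \cap \langle G\rangle$ and set
\[
x := \bigwedge_{i \in I \setminus J} b^{(i)} \in [G].
\]
I would then verify coordinate-wise that $x_k = a_k$ for $k \in I \setminus J$ (the term $i = k$ forces equality, and the other terms in the infimum are at least $a_k$) and $x_k \ge a_k$ for $k \in J$. Combining with $d_k = a_k$ for $k \in J$ and $d_k > a_k$ for $k \in I \setminus J$ yields $d \wedge x = a$, so $a \in [G]_d$.

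I do not anticipate a real obstacle; the whole argument is essentially bookkeeping about how the infimum $d \wedge x$ interacts with the partition of coordinates determined by $J$. The only subtlety is remembering that membership in $[G]$ only requires the infimum to lie in $[G]$ for any finite collection from $\langle G\rangle$, which is immediate from the definition of $[G]$ (and if one prefers to invoke Corollary \ref{cor:dist}, one pads the infimum to exactly $n$ terms by repeating entries).
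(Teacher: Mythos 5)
Your proof is correct and follows essentially the same route as the paper's: the forward direction applies Lemma \ref{membership-semiring} to the witness $x\in[G]$ with $a=d\wedge x$ and observes $\bar\Delta_i(x)\subseteq\bar\Delta_i(a)$ for $i\in I\setminus J$, and the backward direction builds $a$ as $d\wedge\bigl(\wedge_{i\in I\setminus J}b^{(i)}\bigr)$. Your coordinate-wise verification in the backward direction is in fact slightly more explicit than the paper's.
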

\begin{proof}
Suppose $a\in[G]_d$. Then $a=a'\wedge d$ for some $a'\in[G]$. By Lemma \ref{membership-semiring}, $\bar\Delta_i(a')\cap\langle G\rangle\not=\emptyset$ for all $i\in I$. From the assumptions we have $a_j=a'_j$ for $j\in J$ and $a_i<a'_i$ for $i \in I\setminus J$. Therefore $a'$ has the following properties: $a'_j\ge a_j$ for $j\in J$ and $a'_i=a_i$ for $i\in I\setminus J$. So $\bar\Delta_i(a')\subset\bar\Delta_i(a)$ for any $i\in I\setminus J$, and so $\bar\Delta_i(a)\cap\langle G\rangle\not=\emptyset$ for all $i\in I\setminus J$.
	
Vice versa, suppose $\bar\Delta_i(a)\cap\langle G\rangle\not=\emptyset$ for all $i\in I\setminus J$. Let $b^{(i)}\in \bar\Delta_i(a)\cap\langle G\rangle$. Then $a=d\wedge (\wedge_{i\in I} b^{(i)})\in [G]_d$.
\end{proof}

We can potentially remove more elements. We will first focus on the local case.

\subsection{Local case}

A good semigroup $S$ is said to be \emph{local} if the only element of $S$ in the axes is $0$. Hence good systems of generators will not have elements in the axes.

It can be shown that every good semigroup is a direct product of good local semigroups \cite[Theorem 2.5]{a-u}. When considering the two branches setting, unless $S$ is the direct product of two numerical semigroups, then $S$ is local.

We devote this section to characterizing good minimal generating systems for local good semigroups.

\begin{lemma}\label{lem:shared-coord}
Let $S$ be a good local semigroup. Let $G$ be a good generating system for $S$ and $a\in G\setminus\partial(S)$. If there exists $b\in \bar\Delta(a)\cap\langle G\setminus\{a\}\rangle$, then $G\setminus\{a\}$ is a good generating system for $S$.
\end{lemma}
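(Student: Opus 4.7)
The plan is to invoke Lemma~\ref{lemma:combination}, which reduces the claim to proving that $a\in[G']_\C$, where $G':=G\setminus\{a\}$. Since $a\notin\partial(S)$, every coordinate of $a$ satisfies $a_j<\C_j$, so $a\wedge\C=a$; hence $a\in[G']_\C$ if and only if $a\in[G']$. By Lemma~\ref{membership-semiring}, the goal becomes to exhibit, for each $i\in I$, an element of $\bar\Delta_i(a)\cap\langle G'\rangle$.

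Fix $i\in I$ and let $b\in\bar\Delta_{i_0}(a)\cap\langle G'\rangle$ be the element supplied by hypothesis. If $b_i=a_i$ (in particular if $i=i_0$), then $b$ itself is the desired witness. Otherwise $i\neq i_0$ and $b_i>a_i$; applying property (G2) to $a,b\in S$ at the shared coordinate $i_0$ produces $c\in S$ with $c_{i_0}>a_{i_0}$, $c_i=a_i$ (because $a_i\neq b_i$), and $c_j\ge a_j$ for all $j$. Since $c$ may fail to be small, I truncate: $c':=c\wedge\C$ belongs to $S$ by (G1) and to $\mathrm{Small}(S)=[G]_\C$, with $c'_i=a_i<\C_i$ and $c'_{i_0}>a_{i_0}$ (because both $c_{i_0}$ and $\C_{i_0}$ exceed $a_{i_0}$). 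Since $c'_i<\C_i$, Lemma~\ref{membership-good-system} applied to $c'$ at the index $i$ furnishes $y\in\bar\Delta_i(c')\cap\langle G\rangle$, and this $y$ satisfies $y_i=a_i$ together with $y_{i_0}\ge c'_{i_0}>a_{i_0}$.

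To finish, I write $y=k\,a+r$ with $k\in\mathbb N$ and $r\in\langle G'\rangle$. Comparing $i$-th coordinates gives $k\,a_i+r_i=a_i$, and since $a_i>0$ this forces $k\in\{0,1\}$. The case $k=0$ places $y$ in $\langle G'\rangle\cap\bar\Delta_i(a)$ and completes the step. In the case $k=1$ we have $r_i=0$, and the locality of $S$ forces $r$ to lie on a coordinate axis, hence $r=0$; but then $y=a$ and $y_{i_0}=a_{i_0}$, contradicting $y_{i_0}>a_{i_0}$. The main obstacle is precisely this final locality argument: for $n=2$ it is transparent, since $r_i=0$ directly places $r$ on the opposite axis, whereas for larger $n$ some additional care is needed, because in principle an element of $S$ can have a zero coordinate without sitting on any axis.
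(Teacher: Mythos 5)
Your proof is correct and follows essentially the same route as the paper's: apply (G2) to $a$ and $b$ at a shared coordinate, truncate by $\C$ so that the resulting element lies in $\mathrm{Small}(S)=[G]_\C$, express it through $\langle G\rangle$ via the membership lemmas, and use locality to rule out $a$ appearing as a summand. The locality worry you raise at the end is not an obstacle: for good semigroups, \emph{local} means that $0$ is the only element having a zero coordinate (this is the sense in which \cite[Theorem 2.5]{a-u} decomposes any good semigroup as a product of local ones), so $r_i=0$ forces $r=0$ for any $n$; the paper's own proof of this lemma (and of Theorem \ref{th:unique}) relies on exactly the same step.
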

\begin{proof}
Since $a\in G\setminus\partial(S)\subset \mathrm{Small}(S)\setminus\partial(S)$, we have $a_i<\C_i$ for all $i\in I$. 
If $a=b<\C$, then we are done, since then clearly $a\in[G\setminus\{a\}]_C$. 
	
So let us suppose $a\not=b$. Since $b\in \bar\Delta(a)$, there exists an $i\in I$ such that $b_i=a_i$. Then, applying (G2), one can find  a $c\in S\setminus\{a,b\}$ such that $a=b\wedge c$. 
Eventually substituting $c$ with $c\wedge \C$ we can assume $c\in \mathrm{Small}(S)=[G]_\C$. Since $a\not\in\partial(S)$ after this substitution we still have $c\not=a\not=b$. 
	
By Corollary \ref{cor:dist}, we can write
\[
c=\C\wedge(\wedge_{i\in I}c^{(i)}),
\]
for some $\{c^{(i)}\}_{i\in I} \subset \langle G\rangle$. As $c\not=a$ and $a<c\le c^{(i)}$, we also have $a\not=c^{(i)}$ for any $i\in I$.
Let $J\subseteq I$ be the maximal set of indices such that $c\in\bar\Delta_J(a)$ (which implies $b\in\bar\Delta_{I\setminus J}(a)$). Then for every $j\in J$ there exists an $i_j\in I$ such that $c^{(i_j)}_j=c_j=a_j$.
Hence for any $j\in J$, $c^{(i_j)}\in\bar\Delta_j(a)$ and moreover 
\[
a=b\wedge(\wedge_{j\in J} c^{(i_j)}).
\] 
Since $c^{(i_j)}\in\langle G\rangle$, for any $j\in J$ we can write
\[
c^{(i_j)}=d^{(i_j)}_1+\cdots+d^{(i_j)}_{k_{i_j}}
\]
where $\{d^{(i_j)}_h\}_{h\in\{1,\dots,k_{i_j}\}}\subset G$ for any $j\in J$. This yields
\[
c^{(i_j)}=d^{(i_j)}_1+\cdots+d^{(i_j)}_{k_{i_j}}\not=a
\]
and
\[
c^{(i_j)}_j=a_j.
\]
As $S$ is local, this implies $d^{(i_j)}_h\not=a$ for any $h\in\{1,\dots,k_{i_j}\}$ and $j\in J$. Therefore, $c^{(i_j)}\in\langle G\setminus \{a\}\rangle$ for any $j\in J$ and $a=b\wedge(\wedge_{j\in J} c^{(i_j)})\in[G\setminus\{a\}]_\C$.
\end{proof}

In particular, if two elements in a minimal good system of generators share a coordinate, then they must be in the border corresponding to that coordinate.

Let $a\in \mathbb N^n$. We say that $a$ is \emph{positive} if $a$ is not in the axes.

\begin{remark}\label{cond-not-gen}
Let $S$ be a good semigroup with conductor $\C$. Observe that if a good generating system $G$ contains a positive element $a$, then there is a positive integer $k$ such that $\C\le k a$.
Hence $\C\in [G]_\C$. We can then assume that, unless $G=\{\C\}$, the conductor is never in a good generating system.
\end{remark}

We can still sharpen a bit more the characterization of minimal good generating systems, and use just affine spanning without infima.

\begin{theorem}\label{th:carac-minimality}
Let $S$ be a good local semigroup and let $G$ be a good generating system for $S$. For $a\in G$, let $J_a$ be the set of indices maximal with the property $a\in\partial_{J_a}(S)$. Then $G$ is a minimal good generating system if and only if for all $a\in G$, 
\[
\bar\Delta(a)\cap\langle G\setminus\{a\}\rangle=\emptyset, \text{ if } J_a=\emptyset,
\]
and there exists an $i\in I\setminus J_a$ such that
\[
\bar\Delta_i(a)\cap\langle G\setminus\{a\}\rangle=\emptyset, \text{ if } J_a\ne\emptyset.
\]
\end{theorem}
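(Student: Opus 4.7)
The plan is to reduce minimality of $G$ to the condition that $a\notin[G\setminus\{a\}]_\C$ for every $a\in G$, and then to characterize this membership as in the statement. For the reduction, note that since $G\subseteq\mathrm{Small}(S)$ and $[G\setminus\{a\}]_\C\subseteq[G]_\C=\mathrm{Small}(S)$, the set $G\setminus\{a\}$ is a good generating system iff $\mathrm{Small}(S)\subseteq[G\setminus\{a\}]_\C$. By Lemma \ref{lemma:combination} (together with its trivial converse, since $a\in\mathrm{Small}(S)$ already lies in any good generating system of $S$), this is equivalent to $a\in[G\setminus\{a\}]_\C$. Hence $G$ is minimal iff no $a\in G$ belongs to $[G\setminus\{a\}]_\C$, and the task is, for fixed $a\in G$, to show that $a\in[G\setminus\{a\}]_\C$ is equivalent to the failure of the displayed condition.

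For the forward implication I would apply Corollary \ref{cor:dist} to write $a=\C\wedge g_1\wedge\cdots\wedge g_n$ with $g_k\in\langle G\setminus\{a\}\rangle$; in particular each $g_k\geq a$. For every coordinate $i\in I\setminus J_a$ the strict inequality $a_i<\C_i$ forces some $g_{k_i}$ to satisfy $g_{k_i,i}=a_i$, and combined with $g_{k_i}\geq a$ this gives $g_{k_i}\in\bar\Delta_i(a)\cap\langle G\setminus\{a\}\rangle$. If $J_a\neq\emptyset$ this immediately breaks the "exists $i$" clause in the statement; if $J_a=\emptyset$ the same element witnesses $\bar\Delta(a)\cap\langle G\setminus\{a\}\rangle\neq\emptyset$.

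For the converse I would split into two cases. If $J_a\neq\emptyset$ and the characterization fails, pick $b^{(i)}\in\bar\Delta_i(a)\cap\langle G\setminus\{a\}\rangle$ for each $i\in I\setminus J_a$ and set $y=\bigwedge_{i\in I\setminus J_a} b^{(i)}\in[G\setminus\{a\}]$. A coordinate-wise check then yields $\C\wedge y=a$: for $j\in J_a$ each index $i\neq j$ in the infimum gives $b^{(i)}_j\geq a_j=\C_j$, so the coordinate is pinned to $\C_j=a_j$; for $i\in I\setminus J_a$, the equality $b^{(i)}_i=a_i<\C_i$ together with $b^{(k)}_i\geq a_i$ for $k\neq i$ pins the coordinate to $a_i$. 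Thus $a\in[G\setminus\{a\}]_\C$. If $J_a=\emptyset$ the hypothesis only supplies a single $b\in\bar\Delta(a)\cap\langle G\setminus\{a\}\rangle$, insufficient for the previous infimum construction; here I would instead invoke Lemma \ref{lem:shared-coord} (applicable because $a\notin\partial(S)$) to conclude directly that $G\setminus\{a\}$ is a good generating system, hence $a\in[G\setminus\{a\}]_\C$.

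The main obstacle is precisely this last subcase $J_a=\emptyset$: a single shared coordinate is already enough to eliminate $a$, but a single witness $b$ does not let us exhibit $a$ as a bounded infimum of elements of $\langle G\setminus\{a\}\rangle$. The resolution is Lemma \ref{lem:shared-coord}, whose proof uses property (G2) and the locality of $S$ in an essential way to propagate one shared coordinate into a full description of $a$. This is exactly the feature of the local setting that powers the theorem, and it explains the asymmetry between the two cases in the characterization.
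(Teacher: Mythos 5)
Your proposal is correct and follows essentially the same route as the paper: both directions reduce to deciding whether $a\in[G\setminus\{a\}]_\C$, the decomposition via Corollary \ref{cor:dist} with the coordinate-pinning argument handles the generic case, and Lemma \ref{lem:shared-coord} is invoked for the $J_a=\emptyset$ subcase exactly as in the paper's necessity argument. The only difference is organizational (you state the reduction explicitly and argue by contrapositives, while the paper runs separate contradiction arguments), so there is nothing substantive to add.
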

\begin{proof}
In order to simplify notation, if there is no possible misunderstanding with $a$, let us write $J$ instead of $J_a$. 

\noindent \emph{Necessity.} Assume that $G$ is a minimal good generating system for $S$ and let $a\in G$. If $J=\emptyset$, that is, $a\in G\setminus\partial(S)$, then the proof follows by Lemma \ref{lem:shared-coord}. 

Now assume that $J$ is not empty. Suppose to the contrary that for all $i\in I\setminus J$ there exists an $a^{(i)}\in\bar\Delta_i(a)\cap \langle G\setminus\{a\}\rangle$. 
Then $a=\C\wedge (\wedge_{i\in I\setminus J}a^{(i)})$, and consequently $a\in[G\setminus\{a\}]_\C$, which is a contradiction.
	
\noindent \emph{Sufficiency.} If $G=\{a\}$, then $G$ is minimal. Suppose therefore $|G|\ge2$ and $G$ not minimal. 
Then there exists an $a\in G$ such that $a\in[G\setminus\{a\}]_\C$. By Corollary $\ref{cor:dist}$, there exist $\{a^{(i)}\}_{i\in I}\subset \langle G\setminus\{a\}\rangle$ such that
\[
a=\C \wedge (\wedge_{i\in I}a^{(i)}).
\]
Since $S$ is local, $a$ is a positive element in $\mathbb N^n$. 
By Remark \ref{cond-not-gen}, it follows $a\not=\C$. 
Let $J$ be the set of indices maximal with the property $a\in\partial_{J}(S)$. 
Then $a_k<\C_k$ for any $k\in I\setminus J$, and since $a\neq C$, we have $J\neq I$.
Hence for any $k\in I\setminus J$ there is an $i_k$ with $a^{(i_k)}_k=a_k$ and $a^{(i_k)}\ge a$, that is, $a^{(i_k)}\in\bar\Delta_k(a)\cap\langle G\setminus\{a\}\rangle$ for each $k\in I\setminus J$.
This is a contradiction.
\end{proof}

Minimal good generating systems for good local semigroups are unique.

\begin{theorem}\label{th:unique}
Let $S$ be a good local semigroup. Then $S$ has a unique minimal good generating system.
\end{theorem}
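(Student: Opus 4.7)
The plan is to argue by contradiction via the characterization in Theorem~\ref{th:carac-minimality}. Let $G_1, G_2$ be two minimal good generating systems for $S$, and suppose by symmetry that there exists $a \in G_1 \setminus G_2$. By Theorem~\ref{th:carac-minimality} applied at $a$, there is an index $i_0 \in I \setminus J_a$ for which $\bar\Delta_{i_0}(a) \cap \langle G_1 \setminus \{a\}\rangle = \emptyset$. The entire argument is aimed at producing an element of $\bar\Delta_{i_0}(a) \cap \langle G_1 \setminus \{a\}\rangle$, which is the desired contradiction, forcing $a \in G_2$ and hence $G_1 = G_2$ by symmetry.

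Since $a \in \mathrm{Small}(S) = [G_2]_\C$ and $i_0 \notin J_a$, Lemma~\ref{membership-good-system} produces $b \in \bar\Delta_{i_0}(a) \cap \langle G_2\rangle$. I will write $b = g_1 + \cdots + g_k$ with $g_j \in G_2$. Because $a \notin G_2$ and $S$ is local, every $g_j \ne a$ and every $(g_j)_{i_0} \ge 1$, while $\sum_j (g_j)_{i_0} = b_{i_0} = a_{i_0}$. In particular $(g_j)_{i_0} \le a_{i_0} < \C_{i_0}$, so $i_0 \notin J_{g_j}$, and Lemma~\ref{membership-good-system} applied now to $g_j \in [G_1]_\C$ furnishes $c^{(j)} \in \bar\Delta_{i_0}(g_j) \cap \langle G_1\rangle$.

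The key claim is that no $c^{(j)}$ can use $a$ as a summand in its $\langle G_1\rangle$-expression: if $c^{(j)} = m a + r$ with $m \ge 1$ and $r$ a sum in $G_1$, locality gives $(c^{(j)})_{i_0} \ge m\,a_{i_0}$. When $k \ge 2$ we have $(g_j)_{i_0} < a_{i_0}$, which already contradicts $(c^{(j)})_{i_0} = (g_j)_{i_0} \ge m\,a_{i_0} \ge a_{i_0}$. When $k = 1$, the equality $(c^{(j)})_{i_0} = a_{i_0}$ forces $m = 1$ and $r_{i_0} = 0$, which by locality makes $r$ empty, so $c^{(j)} = a$; but then $a = c^{(j)} \ge g_1 = b \ge a$ gives $b = a \in G_2$, contradicting $a \notin G_2$. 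Hence every $c^{(j)} \in \langle G_1 \setminus \{a\}\rangle$, and setting $T := c^{(1)} + \cdots + c^{(k)}$ I obtain $T \in \langle G_1 \setminus \{a\}\rangle$ with $T_{i_0} = \sum_j (g_j)_{i_0} = a_{i_0}$ and $T \ge b \ge a$, so $T \in \bar\Delta_{i_0}(a) \cap \langle G_1 \setminus \{a\}\rangle$, the promised contradiction.

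The main obstacle is the simultaneous handling of the two regimes $k = 1$ and $k \ge 2$, which are dispatched by the same "$i_0$-coordinate budget" argument but with rather different immediate consequences. Locality is what makes the argument go through: it is precisely the fact that every nonzero element of $G_1$ contributes at least $1$ to each coordinate that rules out a nonempty $r$ with $r_{i_0} = 0$, and this matches the remark in the text that uniqueness fails in the non-local setting and must be reduced to the local one.
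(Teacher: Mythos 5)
Your proof is correct, and it follows a genuinely different route from the paper's. The paper's argument chooses $b$ \emph{minimal} in the symmetric difference $(A\cup B)\setminus(A\cap B)$, first proves $b\notin \C\wedge\langle A\rangle$ (using that minimality), and then runs a round trip $B\to A\to B$: the $\langle A\rangle$-witnesses $y^{(i)}$ of $b$ are strictly above $b$, their $\langle B\rangle$-witnesses $d^{(i,j)}$ are forced by minimality of $B$ (via Theorem~\ref{th:carac-minimality}) to have one of the form $b+z$, and locality plus the coordinate equality $d_i=b_i$ forces $d=b$, contradicting $d>b$. You instead take an \emph{arbitrary} $a\in G_1\setminus G_2$, fix the forbidden direction $i_0$ supplied by Theorem~\ref{th:carac-minimality}, and then decompose the $\langle G_2\rangle$-witness $b$ into its atomic summands $g_j\in G_2$; the ``$i_0$-coordinate budget'' $\sum_j (g_j)_{i_0}=a_{i_0}$ together with locality rules out any appearance of $a$ in the $\langle G_1\rangle$-representations $c^{(j)}$ of the atoms, and summing the $c^{(j)}$ lands you in the forbidden set $\bar\Delta_{i_0}(a)\cap\langle G_1\setminus\{a\}\rangle$. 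What your version buys is the elimination of the minimal-element device and of the preliminary claim $b\notin \C\wedge\langle A\rangle$, at the price of the two-regime case analysis $k=1$ versus $k\ge 2$; both proofs rest on the same three pillars (Lemma~\ref{membership-good-system}, Theorem~\ref{th:carac-minimality}, and locality in the form ``a zero coordinate forces the zero element''). Two small points to tidy up: you should note explicitly that $a\ne\C$ (so that $I\setminus J_a\ne\emptyset$ and Lemma~\ref{membership-good-system} applies), which follows from Remark~\ref{cond-not-gen} exactly as in the paper; and the inequality $(c^{(j)})_{i_0}\ge m\,a_{i_0}$ needs only $r\in\mathbb N^n$, not locality --- locality is what you actually invoke for $a_{i_0}\ge 1$ and for $r_{i_0}=0\Rightarrow r=0$.
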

\begin{proof}
Let $A$ and $B$ be two minimal good generating systems for $S$, $C$ the conductor of $S$ and let $b$ be minimal in $(A\cup B)\setminus(A\cap B)$.
Without loss of generality, we can assume $b\in B$.

Let us prove that 
$b\not\in C\wedge\langle A\rangle$.
Assume that $b=C\wedge \left(\sum_l a^{(l)}\right)$ with $a^{(l)}\in A$.
As $b\not\in A$, the sum has more than one term.
Otherwise, $b=C\wedge a^{(1)}=a^{(1)}\in A$ which is a contradiction.
In particular, $a^{(l)}\ne b$ for all $l$.
As $a^{(l)}\le \sum_l a^{(l)}$ and $a^{(l)}\le C$, we have
$a^{(l)}\le C\wedge\left(\sum_l a^{(l)}\right)=b$.
Together with the considerations above, this gives $a^{(l)}<b$ for all $l$.
But then $a^{(l)}\in B$ by minimality of $b$ and thus $b\in[B\setminus\{b\}]_C$, which contradicts the minimality of $B$.
Thus 
\begin{equation}\label{511a}
b\not\in C\wedge\langle A\rangle.
\end{equation}

Let now $J\subset I$ be maximal such that $b\in \partial_J(S)$ ($J$ can also be empty).
As $b\in B\subseteq [A]_C$, by Lemma \ref{membership-good-system} there exist 
\[
\{x^{(i)}\}_{i\in I\setminus J}\subseteq \bar\Delta_i(b)\cap \langle A\rangle
\] 
such that
\[
b=C\wedge (\wedge_{i\in I\setminus J} x^{(i)}) \text{ with } x^{(i)}_i=b_i \text{ for } i\in I\setminus J.
\]
As $x^{(i)}$ do not need to be in $[B]_C$, let us consider $y^{(i)}=C\wedge x^{(i)}$ for all $i\in I\setminus J$.
Then
\[
\{y^{(i)}\}_{i\in I\setminus J}\subseteq \bar\Delta_i(b)\cap C\wedge \langle A\rangle
\]
and
\begin{equation}\label{511b}
b=C\wedge (\wedge_{i\in I\setminus J}y^{(i)}) \text{ with } y^{(i)}_i=b_i \text{ for } i\in I\setminus J.
\end{equation}
Let $K_i\subset I$ be the maximum set of indices such that $y^{(i)}\in \partial_{K_i}(S)$ for all $i\in I\setminus J$.
As $y^{(i)}\in [A]_C=[B]_C$, again by Lemma \ref{membership-good-system} there exist 
\[
\{d^{(i,j)}\}_{j\in I\setminus K_i}\subseteq \bar\Delta_j(y^{(i)})\cap\langle B\rangle
\]
such that
\[
y^{(i)}=C\wedge (\wedge_{j\in I\setminus K_i}d^{(i,j)}) \text{ with } d^{(i,j)}_j=y_j \text{ for } j\in I\setminus K_i.
\]
for all $i\in I\setminus J$.
Since $y^{(i)}_i=b_i<C_i$ (i.e. $i\not\in K_i$) for all $i \in I\setminus J$, and $d^{(i,j)}\ge y^{(i)}\ge b$, for all $i \in I\setminus J$ there exists $j_i$ such that 
\[
d^{(i,j_i)}\in \bar\Delta_i(y^{(i)})\cap \langle B\rangle\subseteq\bar\Delta_i(b)\cap\langle B\rangle.
\]
So by \eqref{511b} we can write
\[
b=C\wedge (\wedge_{i\in I\setminus J}d^{(i,j_i)}) \text{ with } d^{(i,j_i)}_i=b_i \text{ for } i\in I\setminus J.
\]
By Theorem \ref{th:carac-minimality}, this implies that there exists (at least) a $i\in I\setminus J$ such that 
\[
d:=d^{(i_{j_i})}\in\langle B\rangle\setminus \langle B\setminus\{b\}\rangle.
\] 
This means $d=b+z$, with $z\in \langle B\rangle$. 
Since $d_i=b_i$, $z_i=0$. 
But $S$ is local, and this forces $z=0$. 
So $d=b$. 
But by \eqref{511a}, $y^{(i)}\ne b$ for all $i\in I\setminus J$, so in particular $d\ge y^{(i)}>b$.
This is a contradiction.
So the claim is proved.
\end{proof}

Following \cite{garcia}, we call an element
$a\in\mathrm{Small}(S)$ a \emph{maximal element} for $S$ if
$\Delta^S(a)=\emptyset$. As a consequence of Theorem \ref{th:carac-minimality}, an element that is both in a minimal good generating system of $S$ and in $\mathrm{Small}(S)\setminus\partial(S)$ is a maximal element. 
\begin{remark}\label{garc}
Let us consider the two branches case. In \cite[Theorem 6]{garcia} it is proved that, for a value
semigroup of a plane singularity with two branches, the maximal
elements determine the semigroup, once you know the two
projections. More precisely, denote by $S_i$, $i\in\{1,2\}$, the
projections of the good semigroup $S$ in the $x$ and $y$ axes,
respectively. Let $T=\{(x,y)\ |\ x\in S_1,\ y\in S_2,\ (x,y)\notin
\Delta(a)\setminus \{a\}, \textrm{ for all } a\ \text{maximal}\}$.
By definition of maximal element, it follows that $S\subseteq T$.
Now take  any element $(x,y)\in T$. After a case by case checking,
that depends on the fact that both $x$ and $y$ can be the
components either of a maximal point or of a element in the border
of $S$, it is straightforward to see that either $(x,y)$ is a
maximal point or it can be obtained as an infimum of elements in
$S$. Hence $(x,y)\in S$.

However, even if in the minimal generating set of $S$ there are
elements in the border, we are requesting a smaller set of data
with respect to \cite[Theorem 6]{garcia}, since to reconstruct $S$
from the maximal elements one also needs to know the two
projections.

Notice also that if we start from a minimal system of generators,
to reconstruct the semigroup we need to know the conductor or, at
least, that the conductor has to be smaller than or equal to a
given element of $\mathbb N^n$ (see the next example and Remark
\ref{C-1}).
\end{remark}

\begin{example}
Let $S$ be the value semigroup of the ring $\mathbb K[\![x,y]\!]/(y^4-2x^3y^2-4x^5y+x^6-x^7)(y^2-x^3)$ . Then the maximal elements of $S$ are
\begin{multline*}
\{ ( 0, 0 ), ( 4, 2 ), ( 6, 3 ), ( 8, 4 ), ( 10, 5 ), ( 12, 6 ), ( 14, 7 ),   ( 16, 8 ), ( 18, 9 ), ( 20, 10 ), \\ ( 24, 12 ), ( 22, 11 ), ( 28, 14 ) \},
\end{multline*}
while a minimal good generating system is
\[
\{ ( 4, 2 ), ( 6, 3 ), (13, 15 ), ( 26, 15 ), ( 29, 13 ) \}.
\]
Moreover, in order to describe $S$ by the maximal elements, we
also need the value semigroups of $\mathbb
K[\![x,y]\!]/(y^4-2x^3y^2-4x^5y+x^6-x^7)$ and $\mathbb
K[\![x,y]\!]/(y^2-x^3)$, that is, $\langle 4,6,13\rangle$ and
$\langle 2,3\rangle$, respectively; on the other
hand, to describe $S$ by the minimal generating system we need to
know the the conductor $(29,15)$.
\end{example}

Maximal elements of good semigroups may not behave as in
value semigroups of planar curves with two branches, as the
following example shows. This can be used to produce good semigroups that are not value semigroups of curves.
\begin{example}
It is well known (see for instance \cite{garcia}) that there is a
symmetry in the set of maximal elements of the value semigroup of
a planar curve with two branches. This symmetry is expressed in the following:
if $\C$ is the conductor of the value semigroup, and $a$ is a
maximal element, then so is $\C-(1,1)-a$. This is not the case
in general for any good semigroup. Take for instance the
\cite[Example 2.16]{a-u}.
\begin{center}
\begin{tikzpicture}
 \begin{axis}[grid=both, xmin=0,xmax=32]
\addplot[->, style=dotted, very thick] coordinates{
(25,12)
(30,12)};
\addplot[->, style=dotted, very thick] coordinates{
(25,15)
(30,15)};
\addplot[->, style=dotted, very thick] coordinates{
(25,16)
(30,16)};
\addplot[->, style=dotted, very thick] coordinates{
(25,18)
(30,18)};
\addplot[->, style=dotted, very thick] coordinates{
(25,19)
(30,19)};
\addplot[->, style=dotted, very thick] coordinates{
(25,21)
(30,21)};
\addplot[->, style=dotted, very thick] coordinates{
(25,22)
(30,22)};
\addplot[->, style=dotted, very thick] coordinates{
(25,24)
(30,24)};
\addplot[->, style=dotted, very thick] coordinates{
(25,25)
(30,25)};
\addplot[->, style=dotted, very thick] coordinates{
(25,27)
(30,27)};
\addplot[->, style=dotted, very thick] coordinates{
(14,27)
(14,33)};
\addplot[->, style=dotted, very thick] coordinates{
(15,27)
(15,33)};
\addplot[->, style=dotted, very thick] coordinates{
(18,27)
(18,33)};
\addplot[->, style=dotted, very thick] coordinates{
(19,27)
(19,33)};
\addplot[->, style=dotted, very thick] coordinates{
(21,27)
(21,33)};
\addplot[->, style=dotted, very thick] coordinates{
(22,27)
(22,33)};
\addplot[->, style=dotted, very thick] coordinates{
(23,27)
(23,33)};
\addplot[->, style=dotted, very thick] coordinates{
(25,27)
(25,33)};
\addplot [pattern = north east lines,  draw=white] coordinates{
(25,27)
(30,27)
(30,33)
(25,33)
(25,27)
};
\addplot[only marks, mark options={scale=.5,solid} ] coordinates{
(0,0)
(4,3)
(7,6)
(7,9)
(7,12)
(7,13)
(8,6)
(11,9)
(11,12)
(11,15)
(11,16)
(11,17)
(12,9)
(14,12)
(14,15)
(14,16)
(14,18)
(14,19)
(14,20)
(14,21)
(14,22)
(14,23)
(14,24)
(14,25)
(14,26)
(14,27)
(15,12)
(15,15)
(15,16)
(15,18)
(15,19)
(15,20)
(15,21)
(15,22)
(15,23)
(15,24)
(15,25)
(15,26)
(15,27)
(16,12)
(16,15)
(16,16)
(16,18)
(16,19)
(16,20)
(18,12)
(18,15)
(18,16)
(18,18)
(18,19)
(18,21)
(18,22)
(18,23)
(18,24)
(18,25)
(18,26)
(18,27)
(19,12)
(19,15)
(19,16)
(19,18)
(19,19)
(19,21)
(19,22)
(19,23)
(19,24)
(19,25)
(19,26)
(19,27)
(20,12)
(20,15)
(20,16)
(20,18)
(20,19)
(20,21)
(20,22)
(20,23)
(21,12)
(21,15)
(21,16)
(21,18)
(21,19)
(21,21)
(21,22)
(21,24)
(21,25)
(21,26)
(21,27)
(22,12)
(22,15)
(22,16)
(22,18)
(22,19)
(22,21)
(22,22)
(22,24)
(22,25)
(22,26)
(22,27)
(23,12)
(23,15)
(23,16)
(23,18)
(23,19)
(23,21)
(23,22)
(23,24)
(23,25)
(23,26)
(23,27)
(24,12)
(24,15)
(24,16)
(24,18)
(24,19)
(24,21)
(24,22)
(24,24)
(24,25)
(24,26)
(25,12)
(25,15)
(25,16)
(25,18)
(25,19)
(25,21)
(25,22)
(25,24)
(25,25)
(25,27)
};

\end{axis}
\end{tikzpicture}
\end{center}
For this good semigroup, the set of maximal elements can be computed as follows.
\begin{verbatim}
gap> G:=[[4,3],[7,13],[11,17],[14,27],[15,27],[16,20],[25,12],[25,16]];;
gap> g:=GoodSemigroup(G,[25,27]);
<Good semigroup>
gap> Conductor(g);
[ 25, 27 ]
gap> MaximalElementsOfGoodSemigroup(g);
[ [ 0, 0 ], [ 4, 3 ], [ 7, 13 ], [ 8, 6 ], [ 11, 17 ], [ 12, 9 ], [ 16, 20 ],
  [ 20, 23 ], [ 24, 26 ] ]
\end{verbatim}
Observe that $(24,26)-(7,13)$ is not a maximal element.

\end{example}

Notice that, if we have a set $G$ not fulfilling the conditions on Theorem \ref{th:carac-minimality} and a positive element $\C$, then $[G]_\C$ in general does not represent the set of small elements of a good semigroup.

\begin{example}
Let  $G=\{(2,2),(4,2)\}$ and $\C=(6,6)$. Then $[G]_\C$ looks like:
\begin{multicols}{2}
\begin{tikzpicture}[scale=.55]
\begin{axis}[grid=both, minor tick num=1, xmin=0, ymin=0, xtick={0,2,...,6}, ytick={0,2,...,6}]
\addplot [mark=o, only marks] coordinates {
(2,2)
(4,2)
};

\addplot [only marks] coordinates{
(0,0)
(4,4)
(6,4)
(6,6)
};
\end{axis}
\end{tikzpicture}
\columnbreak

Condition (G2) does not hold: there should be an element in $\{(2,3), (2,4), (2,5), (2,6)\}$ since $(2,2)$ and $(4,2)$ share a coordinate.
\end{multicols}
\end{example}

\begin{example}
Take now $G$ to be the empty circles in the next figure, and $\C=(16,16)$. Then $[G]_\C$ is the set of marked dots.

\begin{multicols}{2}
\begin{tikzpicture}[scale=.55]
\begin{axis}[grid=both, minor tick num=3, xmin=0, ymin=0, xtick={0,4,...,16},  ytick={0,4,...,16}]
\addplot [only marks,mark=o] coordinates {
(4,8)
(8,4)
(6,12)

};
\addplot [only marks] coordinates {
(0,0)
(4,4)
(6,4)
(6,8)
(8,8)
(8,12)
(8,16)
(10,8)
(10,12)
(10,16)
(12,8)
(12,12)
(12,16)
(14,8)
(14,12)
(14,16)
(16,8)
(16,12)
(16,16)
};
\end{axis}
\end{tikzpicture}
\columnbreak

Again, $[G]_\C$ cannot be the set of small elements of a good semigroup, since (G2) would not be fulfilled.
\end{multicols}
\end{example}

Unfortunately, even if $G$ agrees with the conditions of Theorem \ref{th:carac-minimality}, the resulting monoid might not be good.
\begin{example}
Let $\C=(8,10)$ and $G=\{(3,4),(7,8)\}$. Then $[G]_C$ is
\begin{center}
\begin{tikzpicture}[scale=.75]
\begin{axis}[grid=both, minor tick num=1, xmin=0, ymin=0, xtick={0,2,..., 8}, ytick={0,2,..., 10}]
\addplot [only marks] coordinates {
(0,0)
(3,4)
(6,8)
(7,8)
(8,10)
};
\end{axis}
\end{tikzpicture}
\end{center}
\end{example}

\begin{remark}\label{C-1} 
It may also happen that we start with $G$ and $\C$, and
$[G]_\C$ contains $C-\mathbf e_i$ for some $i$. 
As a consequence of Lemma \ref{lem:contains-border-axes} the conductor would not be $\C$. 
In our implementation we allow this to happen, and we redefine $\C$.
\end{remark}

For semigroup duplications and amalgamations we can infer what is the minimal good generating system.

\begin{remark}
Let $S$ be a numerical semigroup minimally generated by $G$, and $E\subseteq S$ be an ideal generated by $A$. 
Let $\C(E)$ be the conductor of $E$. 
Recall that the semigroup duplication $S\bowtie E$ is a good semigroup. From the definition, it follows that $\mathrm \C(S\bowtie E)=(\C(E),\C(E))$. 
It is easy to prove that the set
\[(\{\C(E)\}\times A)\cup (A\times\{\C(E)\}) \cup \{(s,s)\mid s \in G\setminus (G\cap E)\}\]
is a  minimal good system of generators for $S\bowtie E$.

Also, for $T$ a numerical semigroups, $g:S\to T$ a monoid morphism (and thus multiplication by an integer) and $E$ an ideal of $T$, we know that $S\bowtie^gE$ is also a good semigroup. 
Its conductor is $(\C(g^{-1}(E)),\C(E))$ with $\C(g^{-1}(E))$ and $\C(E)$ the conductors of $g^{-1}(E)$ and $E$, respectively. 
Again an easy check shows that the set
\[\left\lbrace (s,g(s))\mid s\in G\setminus (G\cap g^{-1}(E))\right\rbrace\cup
(B\times \{\C(E)\}) \cup (\{\C(g^{-1}(E))\}\times A)\]
is a good minimal system of generators for $S\bowtie^gE$, with $H$ a minimal generating system of $E$ and $B$ a minimal generating system of $g^{-1}(E)$.
\end{remark}

\subsection{Nonlocal case}

We already mentioned that every good semigroup is a direct product of good local semigroups (see \cite[Theorem 2.5]{a-u}).
In this case, the minimal good generating systems are not unique, as we can see in the following example. 

\begin{example}
Let $S$, $T$ and $S\times T$ as follows.
\begin{verbatim}
gap> s:=NumericalSemigroup(3,5,7);
<Numerical semigroup with 3 generators>
gap> t:=NumericalSemigroup(2,5);
<Modular numerical semigroup satisfying 5x mod 10 <= x >
gap> g:=cartesianProduct(s,t);
<Good semigroup>
gap> SmallElementsOfGoodSemigroup(g);
[ [ 0, 0 ], [ 0, 2 ], [ 0, 4 ], [ 3, 0 ], [ 3, 2 ],
  [ 3, 4 ], [ 5, 0 ], [ 5, 2 ], [ 5, 4 ] ]
\end{verbatim}
The elements $\{(0,4),(3,2),(5,0)\}$ and $\{(0,4),(3,4),(5,0), (5,2)\}$ are minimal good generating systems for $S\times T$. 
\end{example}

In general, if $S=\prod_i S_i$ is a non local semigroup (with $S_i$ local), it could be a natural choice to take as generating system the product $\prod_i A_i$, where $A_i$ is a minimal good generating system for $S_i$.
This is not a minimal good generating system but it is unique and reflects the fact that the semigroup is the Cartesian product of the projections. This choice is motivated also from the fact that the main properties of $\prod_i S_i$, like symmetry, can be read in terms of the same properties for $S_i$ (see \cite{a-u}).



\section{Relative good ideals}\label{sec:ideals}

We now consider relative ideals of a good semigroup $S$.
We note that from (G1), $E$ has a minimal element $\mathrm m(E)$.
Being $E$ a relative ideal, we have $\mathrm
m(E)+(\C(S)+\mathbb N^n)\subseteq E$. Hence, as we mentioned in Section 2, we do not need
explicitly (G3) in the definition of good relative ideal, and relative ideals have a conductor. We denote the conductor of $E$ by $\C(E)$.

For $H$ a subset of $\mathbb N^n$, denote by $H+S$ the ideal of $S$ generated by $H$.

By Corollary \ref{smallelementsforideals}, a good
relative ideal is fully determined by its small elements.

Among the elements of $\mathrm{Small}(E)$, some might be of the
form $e+s$ with $s\in S\setminus\{(0,0)\}$ or infima of others,
and so in many cases we could choose a smallest subset of
$\mathrm{Small}(E)$ that still describes uniquely $E$.  To this
end, for $H\subseteq\mathbb N^n$, define $[H]$ to be the smallest
relative ideal of $S$ containing $H$ that is closed under infima.

Notice that with this definition we only consider relative ideals contained in $\mathbb N^n$. However this is not restrictive, since by definition of relative ideal there is always an $a$ such that $a+[H]\subset S\subseteq \mathbb N^n$. Moreover, if we consider $S$ to be local, we can also assume that the relative ideals so generated do not have any element on the axes.

Since intersections of ideals of $S$ closed under infima are again ideals of $S$ closed under infima, we get for relative ideals a result similar to Corollary \ref{cor:dist}.

\begin{proposition}\label{prop:distideal}
Let $H$ be a subset of $\mathbb N^n$. Then
\[
[H]=\{h_1\wedge\dots\wedge h_n\mid h_i\in(H+S)\}.
\]
\end{proposition}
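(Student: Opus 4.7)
Let me denote the right hand side by $R=\{h_1\wedge\dots\wedge h_n\mid h_i\in H+S\}$. The plan is to prove the two inclusions separately, imitating the strategy used for Proposition \ref{prop:dist} and Corollary \ref{cor:dist}, but carefully keeping track of the relative ideal structure.

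For $[H]\subseteq R$, the plan is to verify that $R$ itself is a relative ideal of $S$ containing $H$ and closed under infima; minimality of $[H]$ will then force $[H]\subseteq R$. Containment of $H$ is immediate since for any $h\in H$ we have $h=h+0\in H+S$, so taking $h_1=\dots=h_n=h$ puts $h$ in $R$. Closure under the action of $S$ is the only place where the ambient distributivity is needed: if $r=h_1\wedge\dots\wedge h_n\in R$ and $s\in S$, iterating equation \eqref{eq:associative} gives $r+s=(h_1+s)\wedge\dots\wedge(h_n+s)$, and each $h_i+s$ still lies in $H+S$, so $r+s\in R$.

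The step I expect to require the most attention is closure under infima. Given $r,r'\in R$ written as infima of $n$ elements of $H+S$, the expression $r\wedge r'$ is at first glance an infimum of $2n$ elements. To stay inside $R$ I use the same reduction underlying Corollary \ref{cor:dist}: for each coordinate $i\in\{1,\dots,n\}$, pick one of the $2n$ elements that attains the $i$-th coordinate of $r\wedge r'$; this produces at most $n$ elements of $H+S$ whose infimum equals $r\wedge r'$, so $r\wedge r'\in R$. Together with the previous point, $R$ is a relative ideal of $S$ (note that the existence of $a\in S$ with $a+R\subseteq S$ is inherited from the fact that $H+S$ is contained in some translate of $S$ and infima only decrease coordinates) which contains $H$ and is closed under infima, hence $[H]\subseteq R$.

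For the reverse inclusion $R\subseteq [H]$, the argument is essentially by definition: $[H]$ is a relative ideal of $S$ containing $H$, so $H+S\subseteq [H]$; and $[H]$ is closed under infima, so every $h_1\wedge\dots\wedge h_n$ with $h_i\in H+S$ lies in $[H]$. This gives $R\subseteq [H]$ and finishes the proof.
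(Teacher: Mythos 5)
Your proof is correct and follows essentially the same route the paper intends: the paper justifies this proposition by appealing to distributivity of $+$ over $\wedge$, the fact that intersections of relative ideals closed under infima are again such, and the reduction of an arbitrary infimum to one of at most $n$ elements (as in Corollary \ref{cor:dist}), which is exactly the content of your verification that the right-hand side is a relative ideal containing $H$ and closed under infima.
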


We are mostly interested in when $[H]$ covers completely
$\mathrm{Small}(E)$ for some $H\subseteq \mathbb N^n$.
So we define $[H]_{\C(E)}=\C(E)\wedge [H]$.
We say that $H$ is a \emph{good generating system} for $E$ if
\[
[H]_{\C(E)}=\mathrm{Small}(E).
\]
We say that $H$ is \emph{minimal} if no proper subset of $H$ is a good generating system of $E$.

Observe that $[H]_{\C(E)}=[\C(E)\wedge H]_{\C(E)}$, thus we will always assume that good systems of generators are
contained in $\mathrm{Small}(E)$. Clearly $[\mathrm{Small}(E)]_{\C(E)}=\mathrm{Small}(E)$, so that $\mathrm{Small}(E)$ is always a good generating system. However, it doesn't need to be minimal. We would like, when possible, to remove redundant elements of $\mathrm{Small}(E)$.

Let us assume $S$ is local. By conditions (G1) and (G2), using
similar arguments as in Section \ref{sec:good-gen-sys}, we get the following results.

\begin{lemma}\label{membership-good-system-ideals}
Let $d\in\mathbb N^n$ and $G\subseteq \mathrm B(d)$. Let $a\in \mathrm B(d)\setminus\{d\}$ and let $J$ be maximal with the property $d\in \Delta_J(a)$. Then $a\in [G]_d$ if and only if $\bar\Delta_i(a)\cap(G+S)\not=\emptyset$ for all $i\in I\setminus J$.
\end{lemma}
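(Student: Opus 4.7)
The plan is to adapt the proof of Lemma \ref{membership-good-system} to the ideal setting, essentially replacing $\langle G\rangle$ with $G+S$ and invoking Proposition \ref{prop:distideal} in place of Corollary \ref{cor:dist}. The only piece that has no direct counterpart earlier in the paper is the ``ambient'' version of Lemma \ref{membership-semiring} for ideals, so I would first establish that $a\in [G]$ if and only if $\bar\Delta_i(a)\cap(G+S)\neq\emptyset$ for every $i\in I$. The argument mirrors Lemma \ref{membership-semiring}: by Proposition \ref{prop:distideal}, any element of $[G]$ is the infimum of at most $n$ elements $b^{(1)},\dots,b^{(n)}\in G+S$, and for each coordinate $j$ there is some index $i_j$ with $b^{(i_j)}_j=a_j$, hence $b^{(i_j)}\in\bar\Delta_j(a)\cap(G+S)$. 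Conversely, the infimum of the chosen witnesses again lies in $[G]$ by Proposition \ref{prop:distideal}.

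For the necessity of the main statement, suppose $a\in[G]_d$, so $a=a'\wedge d$ for some $a'\in[G]$. Since $J$ is maximal with $d\in\Delta_J(a)$, we have $d_j=a_j$ for $j\in J$ and $d_i>a_i$ for $i\in I\setminus J$. From $a=a'\wedge d$ and $a_i<d_i$ for $i\in I\setminus J$, we deduce $a_i=a'_i$, which implies $\bar\Delta_i(a')\subseteq\bar\Delta_i(a)$. Applying the ambient version just proved to $a'$, we obtain $\bar\Delta_i(a')\cap(G+S)\neq\emptyset$, and hence $\bar\Delta_i(a)\cap(G+S)\neq\emptyset$, for every $i\in I\setminus J$.

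For sufficiency, given witnesses $b^{(i)}\in\bar\Delta_i(a)\cap(G+S)$ for each $i\in I\setminus J$, I would verify coordinatewise that
\[
a \;=\; d\wedge \bigwedge_{i\in I\setminus J} b^{(i)}.
\]
For $j\in J$, $d_j=a_j$ and $b^{(i)}_j\ge a_j$, so the $j$-th coordinate of the right hand side is $a_j$. For $i\in I\setminus J$, $b^{(i)}_i=a_i$ and $d_i>a_i$, so the $i$-th coordinate is also $a_i$. Since at most $n$ elements of $G+S$ are infimized, Proposition \ref{prop:distideal} places $\bigwedge_{i\in I\setminus J} b^{(i)}$ inside $[G]$, whence $a\in d\wedge[G]=[G]_d$.

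There is essentially no main obstacle: once the ideal analogue of Lemma \ref{membership-semiring} is in hand, the rest is bookkeeping on the indices in $J$ versus $I\setminus J$. The only point requiring mild care is exploiting the maximality of $J$ to guarantee strict inequalities $d_i>a_i$ on $I\setminus J$, which is what makes the restricted family of witnesses sufficient to reconstruct $a$ as an infimum with $d$.
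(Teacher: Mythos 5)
Your proof is correct and follows exactly the route the paper intends: the paper omits the proof of Lemma \ref{membership-good-system-ideals}, stating only that it follows ``using similar arguments as in Section \ref{sec:good-gen-sys}'', and your argument is precisely that adaptation, replacing $\langle G\rangle$ by $G+S$ and Corollary \ref{cor:dist} by Proposition \ref{prop:distideal}, including the needed ideal analogue of Lemma \ref{membership-semiring}. The coordinatewise verification in the sufficiency direction and the use of the maximality of $J$ to get $a_i=a'_i$ on $I\setminus J$ match the proof of Lemma \ref{membership-good-system} step for step.
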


The following lemma differs slightly from Lemma \ref{lem:shared-coord}, and for this reason we include a proof.

\begin{lemma}\label{lem:shared-coord-ideal}
Let $S$ be a good local semigroup and $E$ a good relative ideal. Let $H$ be a good generating system for $E$ and $a\in H\setminus\partial(E)$. If there exists $b\in \bar\Delta(a)\cap((H\setminus\{a\})+S)$, then $H\setminus\{a\}$ is a good generating system for $E$.
\end{lemma}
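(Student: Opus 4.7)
The plan is to mimic the proof of Lemma \ref{lem:shared-coord} with the substitutions $\langle G\rangle \rightsquigarrow H+S$, $\C \rightsquigarrow \C(E)$, $\mathrm{Small}(S)\rightsquigarrow\mathrm{Small}(E)$, and using Proposition \ref{prop:distideal} in place of Corollary \ref{cor:dist}. The goal is to show $a\in [H\setminus\{a\}]_{\C(E)}$. Since $a\in H\setminus\partial(E)$, we have $a_k<\C(E)_k$ for every $k\in I$, which will be used repeatedly. If $a=b$, then $b\in(H\setminus\{a\})+S\subseteq [H\setminus\{a\}]_{\C(E)}$ and we are done.

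Assume $a\ne b$. Since $b\in\bar\Delta(a)$, there is $i\in I$ with $b_i=a_i$. Applying (G2) to $a,b\in E$ at coordinate $i$ we obtain $c'\in E$ with $a=b\wedge c'$ and $c'_i>a_i$. Setting $c=c'\wedge \C(E)$ (which lies in $E$ because $\C(E)\in E$ and $E$ satisfies (G1)) we still have $a=b\wedge c$, $c\in\mathrm{Small}(E)$, and $c_i>a_i$, hence $c\ne a$. By Proposition \ref{prop:distideal} we write
\[
c=\C(E)\wedge c^{(1)}\wedge\cdots\wedge c^{(n)}, \qquad c^{(l)}\in H+S.
\]

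Let $J=\{k\in I\mid c_k=a_k\}$. For $k\notin J$ we have $c_k>a_k=\min\{b_k,c_k\}$, forcing $a_k=b_k$. For $k\in J$, $a_k=c_k<\C(E)_k$, so $a_k=\min_l c^{(l)}_k$ and we may pick $l_k$ with $c^{(l_k)}_k=a_k$, which gives $c^{(l_k)}\in\bar\Delta_k(a)$. A routine coordinate check then shows
\[
a=b\wedge\Bigl(\bigwedge_{k\in J}c^{(l_k)}\Bigr).
\]

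It remains to prove that each $c^{(l_k)}$ lies in $(H\setminus\{a\})+S$. Write $c^{(l_k)}=h^{(l_k)}+s^{(l_k)}$ with $h^{(l_k)}\in H$ and $s^{(l_k)}\in S$. If $h^{(l_k)}=a$, then $c^{(l_k)}_k=a_k+s^{(l_k)}_k=a_k$ yields $s^{(l_k)}_k=0$; since $S$ is local this forces $s^{(l_k)}=0$, hence $c^{(l_k)}=a$, contradicting $c^{(l_k)}\geq c>a$ (the strict inequality at some coordinate follows from $c\ne a$ and $c\geq a$). Hence $h^{(l_k)}\in H\setminus\{a\}$, and taking the infimum with $\C(E)$ (using $a\leq \C(E)$) exhibits $a$ as an element of $[H\setminus\{a\}]_{\C(E)}$. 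The main subtlety, and the only place where the argument departs from the semigroup version, is the replacement step $c\leftarrow c\wedge\C(E)$ together with the locality argument: one must verify carefully that the new $c$ still satisfies $a=b\wedge c$ and $c\ne a$, which is where the hypothesis $a\in H\setminus\partial(E)$ becomes essential.
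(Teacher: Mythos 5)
Your proof is correct and follows essentially the same route as the paper's: apply (G2) to get $c$ with $a=b\wedge c$, truncate by $\C(E)$ (where $a\notin\partial(E)$ is exactly what keeps $c\ne a$), decompose $c$ via Proposition \ref{prop:distideal}, select the $c^{(l_k)}$ attaining the minimum in each coordinate where $c_k=a_k$, and use locality of $S$ to rule out $a$ as a summand. Your closing step (assuming $h^{(l_k)}=a$ and deriving $c^{(l_k)}=a$, contradicting $c^{(l_k)}\ge c>a$) is just a slightly cleaner packaging of the paper's three-case analysis of $g^{(i_j)}_j+s^{(i_j)}_j=a_j$.
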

\begin{proof}
Since $a\not\in\partial(E)$ we have $a<\C(E)$. 
If $a=b<\C(E)$, then we are done, since then clearly $a\in[H\setminus\{a\}+S]_{\C(E)}$. 
So let us suppose $a\not=b$. 
By assumption we have $H\subseteq \mathrm{Small}(E)$ and $[H]_{\C(E)}=\mathrm{Small}(E)$. 
Since $b\in \bar\Delta(a)$, there exists an $i\in I$ such that $b_i=a_i$. 
Then, applying (G2), one can find  a $d\in E\setminus\{a,b\}$ such that 
\[
a=b\wedge d. 
\]

\noindent Eventually substituting $d$ with $d\wedge \C(E)$ we can assume $d\in \mathrm{Small}(E)=[H]_{\C(E)}$. 
Since $a\not\in\partial(E)$, after this substitution we still have $d\not=a\not=b$. 

\noindent By Proposition \ref{prop:distideal}, we can write
\[
d=\min\{\C(E),d^{(i)}\mid i\in I\}, \text{ with }\{d^{(i)}\}_{i\in I} \subset (H+S).
\]
As $d\not=a$ and $a<d\le d^{(i)}$, we also have $a\not=d^{(i)}$ for any $i\in I$.
Let $J\subseteq I$ be the maximal set of indices such that $d\in\bar\Delta_J(a)$ (which implies $b\in\bar\Delta_{I\setminus J}(a)$). 
For every $j\in J$ there exists an $i_j\in I$ such that $d^{(i_j)}_j=d_j=a_j$.
Hence for any $j\in J$, $d^{(i_j)}\in\bar\Delta_j(a)$ and moreover 
\[
a=\min\{b,d^{(i_j)}\mid j\in J\}.
\] 
Since $d^{(i_j)}\in (H+S)$, for any $j\in J$ we can write
\[
d^{(i_j)}=g^{(i_j)}+s^{(i_j)}
\]
where $g^{(i_j)}\in H$ and $s^{(i_j)}\in S$ for any $j\in J$. This yields
\[
d^{(i_j)}=g^{(i_j)}+s^{(i_j)}\not=a
\]
and
\[
d^{(i_j)}_j=a_j.
\]
This implies one of the following three possibilities: either
$s^{(i_j)}_j=a_j$ and $g^{(i_j)}$ belongs to the axes, which is not possible since we assume $E\subseteq S$ and $S$ local; or
$g^{(i_j)}_j,s^{(i_j)}_j<a_j$ and  $d^{(i_j)}\in (H\setminus\{a\}+S)$ or, as $S$ is local,
$s^{(i_j)}=0$ and $d^{(i_j)}=g^{(i_j)}_j\in H\setminus\{a\}$.

\noindent Therefore $d^{(i_j)}\in(H\setminus\{a\}+S)$ for any $j\in J$ and $a=\min\{b,d^{(i_j)}\mid j\in J\}\in[H\setminus\{a\}]_{\C(E)}$.
\end{proof}

The following is the analogous of Theorem \ref{th:carac-minimality}. We omit the proof since it is completely analogous to the one done for the semigroup. 

\begin{theorem}\label{th:carac-minimalityideal}
Let $S$ be a good local semigroup, $E$ a good semigroup of $S$ and $H$ a good positive generating system for $E$. For $a\in H$, let $J_a$ be the set of indices maximal with the property $a\in\partial_{J_a}(E)$ ($J_a$ can be empty). Then $H$ is a minimal good generating system if and only if 
\[
\bar\Delta(a)\cap ((H\setminus\{a\})+S)=\emptyset, \text{ if } J_a=\emptyset,
\]
and there exists an $i\in I\setminus J_a$ such that
\[
\bar\Delta_i(a)\cap ((H\setminus\{a\})+S)=\emptyset, \text{ if } J_a\ne\emptyset.
\]
\end{theorem}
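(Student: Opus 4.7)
The plan is to follow the proof of Theorem~\ref{th:carac-minimality} essentially verbatim, translating from the monoid setting to the ideal setting: the affine hull $\langle G\setminus\{a\}\rangle$ is replaced by the ideal $(H\setminus\{a\})+S$, Lemma~\ref{lem:shared-coord} is replaced by Lemma~\ref{lem:shared-coord-ideal}, and Corollary~\ref{cor:dist} is replaced by Proposition~\ref{prop:distideal}. The localness of $S$ together with the positivity of $H$ plays the same role as before, ensuring that elements of $(H\setminus\{a\})+S$ are off the axes and that coordinate-by-coordinate matching works.

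For necessity, fix $a\in H$ and assume $H$ is minimal. If $J_a=\emptyset$, the conclusion is exactly Lemma~\ref{lem:shared-coord-ideal}: any $b\in\bar\Delta(a)\cap((H\setminus\{a\})+S)$ would make $H\setminus\{a\}$ a good generating system of $E$, contradicting minimality. If $J_a\ne\emptyset$ and if for every $i\in I\setminus J_a$ there existed $a^{(i)}\in\bar\Delta_i(a)\cap((H\setminus\{a\})+S)$, then using $a_j=\C(E)_j$ for $j\in J_a$, $a^{(i)}_i=a_i$ and $a^{(i)}\ge a$, a direct coordinate check yields
\[
a=\C(E)\wedge\Big(\bigwedge_{i\in I\setminus J_a}a^{(i)}\Big)\in[H\setminus\{a\}]_{\C(E)},
\]
again contradicting minimality.

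For sufficiency, suppose $H$ satisfies the stated condition but fails to be minimal, so some $a\in H$ lies in $[H\setminus\{a\}]_{\C(E)}$. Proposition~\ref{prop:distideal} produces $a^{(i)}\in(H\setminus\{a\})+S$ with $a=\C(E)\wedge\bigwedge_{i\in I}a^{(i)}$. Let $J_a$ be maximal with $a\in\partial_{J_a}(E)$; then $a_k<\C(E)_k$ for every $k\in I\setminus J_a$, which forces the infimum above to be realized at some index $i_k$ in that coordinate, giving $a^{(i_k)}\in\bar\Delta_k(a)\cap((H\setminus\{a\})+S)$ for each such $k$. This produces a witness in every coordinate $k\in I\setminus J_a$, contradicting the hypothesis in both cases ($J_a=\emptyset$ or $J_a\ne\emptyset$).

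The only genuinely new bookkeeping, and what I expect to be the main (small) obstacle, is an ideal-level analog of Remark~\ref{cond-not-gen}, needed in the sufficiency step to guarantee $a\ne\C(E)$ (equivalently $J_a\subsetneq I$, so that $I\setminus J_a$ is nonempty). This follows from the observation that if $H$ contains any positive $h$, then for $s\in S$ with $h+s\ge\C(E)$ we have $\C(E)=\C(E)\wedge(h+s)\in[H\setminus\{\C(E)\}]_{\C(E)}$, so $\C(E)$ may be removed from any minimal $H$ unless $H=\{\C(E)\}$. With this in hand, every remaining detail transfers line-for-line from the proof of Theorem~\ref{th:carac-minimality}, which is why the authors allow themselves to omit it.
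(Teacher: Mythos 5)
Your proposal is correct and follows exactly the route the paper intends: the authors omit this proof precisely because it is the line-for-line translation of Theorem \ref{th:carac-minimality} that you carry out, with $\langle G\setminus\{a\}\rangle$ replaced by $(H\setminus\{a\})+S$, Lemma \ref{lem:shared-coord} by Lemma \ref{lem:shared-coord-ideal}, and Corollary \ref{cor:dist} by Proposition \ref{prop:distideal}. You also correctly identify and fill the one point needing a new (easy) observation, namely the ideal analog of Remark \ref{cond-not-gen} guaranteeing $a\ne\C(E)$ in the sufficiency step.
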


As for the good local semigroup case, we have that minimal good generating systems are unique. 
The proof is similar to the one of Theorem \ref{th:unique}, so we do not include it.

\begin{theorem}\label{th:uniqueideal}
Let $S$ be a good local semigroup and $E$ a good semigroup ideal. Then $E$ has a unique minimal good generating system.
\end{theorem}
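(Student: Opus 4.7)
The plan is to emulate the proof of Theorem \ref{th:unique} almost verbatim, replacing the submonoid $\langle A\rangle$ (respectively $\langle B\rangle$) throughout by the $S$-ideal $A+S$ (respectively $B+S$), and replacing Lemma \ref{membership-good-system} and Theorem \ref{th:carac-minimality} by their ideal counterparts Lemma \ref{membership-good-system-ideals} and Theorem \ref{th:carac-minimalityideal}. Write $\C$ for $\C(E)$. Let $A$ and $B$ be two minimal good generating systems for $E$ and pick $b$ minimal (in the componentwise partial order on $\mathbb N^n$) in the symmetric difference $(A\cup B)\setminus(A\cap B)$; without loss of generality $b\in B\setminus A$.

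The first step is to prove the analogue $b\notin \C\wedge(A+S)$ of \eqref{511a}. This is in fact easier than in the semigroup case: if $b=\C\wedge(a+s)$ with $a\in A$ and $s\in S$, then $a\le\C$ and $a\le a+s$ give $a\le b$, while $a\in A$ and $b\notin A$ force $a\ne b$, so $a<b$. Minimality of $b$ in the symmetric difference then forces $a\in A\cap B\subseteq B\setminus\{b\}$, whence $b=\C\wedge(a+s)\in [B\setminus\{b\}]_\C$, contradicting the minimality of $B$. Next, let $J$ be maximal with $b\in\partial_J(E)$; by Lemma \ref{membership-good-system-ideals} applied to $b\in[A]_\C$ there exist $\{x^{(i)}\}_{i\in I\setminus J}\subseteq\bar\Delta_i(b)\cap(A+S)$ with $b=\C\wedge\bigl(\wedge_{i\in I\setminus J}x^{(i)}\bigr)$ and $x^{(i)}_i=b_i$. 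Setting $y^{(i)}:=\C\wedge x^{(i)}\in\C\wedge(A+S)$, the first step gives $y^{(i)}\ne b$, hence $y^{(i)}>b$ strictly.

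Apply Lemma \ref{membership-good-system-ideals} a second time to each $y^{(i)}\in[B]_\C$ (note $y^{(i)}\ne\C$, since $y^{(i)}_i=b_i<\C_i$ as $i\notin J$), and use that the maximal $K_i$ with $y^{(i)}\in\partial_{K_i}(E)$ satisfies $i\notin K_i$: this yields for each $i\in I\setminus J$ some $j_i$ and an element $d^{(i,j_i)}\in\bar\Delta_i(y^{(i)})\cap(B+S)\subseteq\bar\Delta_i(b)\cap(B+S)$, giving $b=\C\wedge\bigl(\wedge_{i\in I\setminus J}d^{(i,j_i)}\bigr)$ with $d^{(i,j_i)}_i=b_i$. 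Theorem \ref{th:carac-minimalityideal} applied to $B$ now forces some $i\in I\setminus J$ for which $d:=d^{(i,j_i)}\in(B+S)\setminus((B\setminus\{b\})+S)$; any decomposition $d=b'+s'$ with $b'\in B$ must therefore satisfy $b'=b$, so $d=b+s$ for some $s\in S$. The equality $d_i=b_i$ gives $s_i=0$, locality of $S$ forces $s=0$, and hence $d=b$, contradicting $d\ge y^{(i)}>b$.

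The main obstacle is essentially bookkeeping: one must track the cases $J=\emptyset$ versus $J\ne\emptyset$ (and analogously for the $K_i$) so that the correct branch of Theorem \ref{th:carac-minimalityideal} is applied. Conceptually the argument is strictly simpler than the semigroup analogue, since elements of $A+S$ (respectively $B+S$) are single-term expressions $a+s$ rather than genuine sums $\sum_l a^{(l)}$, which eliminates the delicate strict-inequality step needed in Theorem \ref{th:unique} to show that each summand lies strictly below $b$.
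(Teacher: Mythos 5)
Your proposal is correct and matches the paper's intent exactly: the paper omits this proof, stating only that it is ``similar to the one of Theorem \ref{th:unique}'', and your argument is precisely that adaptation, with $\langle A\rangle$ replaced by $A+S$ and Lemma \ref{membership-good-system} and Theorem \ref{th:carac-minimality} replaced by Lemma \ref{membership-good-system-ideals} and Theorem \ref{th:carac-minimalityideal}. Your observation that the first step (the analogue of \eqref{511a}) becomes simpler because elements of $A+S$ have a single $A$-summand is accurate, and the final step ($s_i=0$ plus locality forces $s=0$) is used in the same way as in the paper's proof of Theorem \ref{th:unique}.
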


\subsection{The canonical ideal}

There is a distinguished good relative ideal that plays an
important role in many properties of good semigroups: the
canonical ideal. Before giving its definition we recall the notion
of symmetry for good semigroups.

Let $\C$ be the conductor of $S$ and let
$\gamma=\C-\textbf{1}$, as defined in Section \ref{sec:good-sem}. 

In \cite{delgado}, a semigroup $S$ is said to be \emph{symmetric} if
\[
s\in S\ \hbox{if and only if }\mathrm \Delta^S(\gamma -s)=\emptyset.
\]

The subset $\K=\K(S)=\{a\in\mathbb Z^n\mid \Delta^S(\gamma-a)=\emptyset\}$ is a relative ideal (see
\cite[Proposition 2.14]{a-u}) and it is called the \emph{canonical
ideal} of $S$. A more general definition has been recently given
in \cite{K-T-S}. Hence $S$ is symmetric if and only if $\K=S$. In
\cite{danna1} it is proved that $\K$ satisfies (G1) and (G2), hence
it is a good relative ideal of $S$.

\begin{lemma}
Let $S$ be a local good semigroup. Then
\begin{itemize}
\item $\K\subseteq \mathbb N^n$,
\item $\C(\K)=\C$.
\end{itemize}
\end{lemma}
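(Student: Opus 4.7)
The plan is to verify each bullet directly from the definition $\K = \{a \in \mathbb{Z}^n \mid \Delta^S(\gamma - a) = \emptyset\}$, using only property (G3) together with the fact that $\mathbf{0} \in S$. Locality is not needed for the argument itself; it is the standing assumption of the section.

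For the first bullet I would argue by contrapositive: suppose $a \in \mathbb{Z}^n$ has some coordinate $a_i \le -1$, and exhibit an element of $\Delta^S(\gamma-a)$. Setting $x = \gamma - a$, the key observation is $x_i = \gamma_i - a_i \ge \gamma_i + 1 = \C_i$. Now define $b \in \mathbb{Z}^n$ by $b_i = x_i$ and $b_j = \max\{x_j, \C_j\} + 1$ for $j \ne i$. By construction $b \ge \C$, so $b \in S$ by (G3), and clearly $b \in \Delta_i(x)$ since $b_i = x_i$ and $b_j > x_j$ for all $j \ne i$. Hence $b \in \Delta_i^S(\gamma - a) \subseteq \Delta^S(\gamma - a)$, forcing $a \notin \K$.

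For the second bullet I would first prove $\C + \mathbb{N}^n \subseteq \K$, which gives $\C(\K) \le \C$, and then rule out any strict inequality. For the inclusion: if $a \ge \C$, then $\gamma - a \le -\mathbf{1}$, so any $b \in \Delta(\gamma - a)$ has some coordinate $b_i = (\gamma-a)_i \le -1$, which forces $b \notin \mathbb{N}^n \supseteq S$; therefore $\Delta^S(\gamma - a) = \emptyset$ and $a \in \K$. For the reverse inequality, suppose for contradiction that $\C(\K)_i < \C_i$ for some $i$. As all coordinates are integers and $\C(\K) \le \C$, the element $c := \C - \mathbf{e}_i$ satisfies $c \ge \C(\K)$, whence $c \in \C(\K) + \mathbb{N}^n \subseteq \K$. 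But $\gamma - c = \mathbf{e}_i - \mathbf{1}$ has $i$-th coordinate $0$ and all other coordinates $-1$, so $\mathbf{0} \in S$ lies in $\Delta_i(\gamma - c)$ (its $i$-th coordinate matches, and every other coordinate $0$ is strictly greater than $-1$). This witnesses $\Delta^S(\gamma - c) \ne \emptyset$ and contradicts $c \in \K$; hence $\C(\K) = \C$.

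The argument is essentially direct once one tracks how $\gamma - a$ moves as $a$ crosses the thresholds $\C_i$ (so that coordinates of $\gamma - a$ become negative and preclude membership in $\mathbb{N}^n$). The only mildly delicate point is noticing that the single origin $\mathbf{0}$ is already enough to obstruct $\C - \mathbf{e}_i \in \K$, and this is precisely what prevents the conductor of $\K$ from shrinking below $\C$ in any coordinate.
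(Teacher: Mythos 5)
Your proof is correct. The first bullet and the inclusion $\C+\mathbb N^n\subseteq\K$ follow the paper's argument, except that you make explicit the witness in $\Delta^S(\gamma-a)$ that the paper leaves implicit. The last step, however, takes a genuinely different (and better) route. The paper rules out a smaller conductor by testing the points $\gamma+\mathbf{e}_i$: it computes $\gamma-(\gamma+\mathbf{e}_i)=-\mathbf{e}_i$ and asserts $\mathbf{0}\in\Delta^S(-\mathbf{e}_i)$. That assertion, and the implicit claim that some $\gamma+\mathbf{e}_i$ dominates a hypothetical smaller conductor, are immediate only for $n=2$: for $n\ge 3$ the vector $-\mathbf{e}_i$ has several coordinates equal to $0$, so $\mathbf{0}$ fails the strict inequalities required to lie in any $\Delta_j(-\mathbf{e}_i)$, and $\gamma+\mathbf{e}_i$ drops below $\C$ in every coordinate but the $i$-th. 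You instead pick the specific coordinate $i$ where $\C(\K)$ would shrink and test $c=\C-\mathbf{e}_i$, so that $c\ge\C(\K)$ is automatic, and $\gamma-c=\mathbf{e}_i-\mathbf{1}$ has all coordinates other than the $i$-th equal to $-1$, which makes $\mathbf{0}\in\Delta_i(\gamma-c)$ valid for every $n$. Your variant is thus uniform in $n$ and repairs a small gap in the paper's version outside the two-branch case; your observation that locality is never actually invoked is also accurate.
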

\begin{proof}
Let $a\in \mathbb Z^n$. Assume that $a_i< 0$. Then $(\gamma-a)_i\ge\C_i$, which means that $\Delta^S(\gamma-a)$ is not empty. Hence $a\not\in \K$. Hence $\K\subseteq \mathbb N^n$.

Now take $a\ge \C$. Then $\gamma-a<0$, and thus $\Delta^S(\gamma-a)=\emptyset$. This proves that $\C+\mathbb N^n\subseteq \K$.

Consider now $\gamma+\textbf{e}_i$, where $\textbf{e}_i$ is the $i$-th element of the canonical base of $\mathbb N^n$. We have that $\textbf{0}\in \Delta^S(\gamma-(\gamma+\textbf{e}_i))=\Delta^S(-\textbf{e}_i)$ for all $i\in I$. And so $\gamma+\textbf{e}_i\not\in \K$ for all $i\in I$. This implies that $\C=\C(\K)$.
\end{proof}

Hence a possible way to compute $\K$ is to determine which
elements $a$ such that $\textbf{0}\le a\le \C$ satisfy the condition $\Delta^S(\gamma-a)=\emptyset$.

%
%

In order to compute more efficiently $K$, it would be important to find a generating system for it.

In the following we give a good generating system in the two dimensional case, while the general case remains open. So from now till the end on the section it will be $I=\{1,2\}$.

We define $S_i:=\pi_i(S)$ for $i\in I$ where $\pi_i$ is the natural projection.

\begin{lemma}\label{beta1}
An element $b$, with $b_i=C_i$ for some $i\in I$, is in $\K$ if and only if $\gamma_j-b_j\notin S_j$, where $\{j\}=I\setminus\{i\}$.
\end{lemma}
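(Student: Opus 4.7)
My plan is to unwind the definition $\K=\{a\in\mathbb Z^n\mid \Delta^S(\gamma-a)=\emptyset\}$ for the specific case $n=2$ and $b$ on the border, and split $\Delta^S(\gamma-b)$ according to its two ``directions''. Everything boils down to the fact that when a coordinate of $\gamma-b$ equals $-1$, one of the two $\Delta_k$-slices is automatically disjoint from $S\subseteq\mathbb N^2$.

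In more detail: fix $b$ with $b_i=\C_i$ and let $\{j\}=I\setminus\{i\}$. Then the $i$-th coordinate of $\gamma-b$ equals $\C_i-1-\C_i=-1$. By definition,
\[
\Delta^S(\gamma-b)=\bigl(\Delta_i(\gamma-b)\cap S\bigr)\cup\bigl(\Delta_j(\gamma-b)\cap S\bigr).
\]
First I would observe that every element of $\Delta_i(\gamma-b)$ has its $i$-th coordinate equal to $-1$, so $\Delta_i(\gamma-b)\cap S=\emptyset$ holds trivially since $S\subseteq\mathbb N^2$. Hence $\Delta^S(\gamma-b)=\emptyset$ is equivalent to $\Delta_j(\gamma-b)\cap S=\emptyset$.

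Next I would identify this remaining slice explicitly: using the definition,
\[
\Delta_j(\gamma-b)=\{c\in\mathbb Z^2\mid c_j=\gamma_j-b_j,\; c_i>-1\}=\{c\in\mathbb Z^2\mid c_j=\gamma_j-b_j,\; c_i\ge 0\}.
\]
Therefore $\Delta_j(\gamma-b)\cap S=\{s\in S\mid s_j=\gamma_j-b_j\}$, which is nonempty precisely when $\gamma_j-b_j$ lies in the projection $S_j=\pi_j(S)$. (Note that if $b_j>\gamma_j$, then $\gamma_j-b_j<0$ is automatically outside $S_j\subseteq\mathbb N$, consistently with the fact that the slice is empty in that case.)

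Combining the two steps, $b\in\K$ if and only if the only nontrivial slice $\Delta_j(\gamma-b)\cap S$ is empty, which happens if and only if $\gamma_j-b_j\notin S_j$. There is no real obstacle here: the argument is a direct computation that exploits the peculiarity of dimension two, namely that $\Delta(\cdot)$ has only two components and fixing $b_i=\C_i$ kills one of them. The reason this does not immediately generalize to arbitrary $n$ is that for $n\ge 3$ one needs several coordinates of $b$ to be on the border before the analogous slices become trivial, so the characterization of the full $\K$ requires looking at points with various subsets of coordinates attaining $\C_k$.
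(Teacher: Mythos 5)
Your proof is correct and follows essentially the same route as the paper's: both observe that $b_i=\C_i$ forces the $i$-th coordinate of $\gamma-b$ to be $-1$, so $\Delta_i^S(\gamma-b)=\emptyset$ automatically, reducing the condition $\Delta^S(\gamma-b)=\emptyset$ to $\Delta_j^S(\gamma-b)=\emptyset$, i.e.\ $\gamma_j-b_j\notin S_j$. The only difference is cosmetic: the paper normalizes to $i=2$, while you keep the indices general and spell out the slice computation a bit more explicitly.
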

\begin{proof} 
It is not restrictive to assume $i=2$.
By definition of $K$ we have that $b\in \K$ if and only if $\Delta^S(\gamma-b)=\emptyset$. Since $\gamma-b=(\gamma_1-b_1,-1)$, we have $\Delta^S_2(\gamma-b)=\emptyset$. Hence $\Delta^S(\gamma-b)=\emptyset$ if and only if $\Delta^S_1(\gamma-b)=\emptyset$, that is $\gamma_1-b_1\notin S_1$.
\end{proof}

%
%

As we already said, an element $b$ is maximal in $\K$ when $\Delta^\K(b)= \emptyset$. 
In the two dimensional case, the notion of maximality is very natural. 
In fact, it comes from the fact that $b$ will be maximal in both its vertical fiber (the elements in $\K$ having the same second
coordinate as $b$) and its horizontal fiber. 
Note that from the definition of good relative ideal, being maximal in the vertical fiber is equivalent to being maximal in the horizontal fiber. 
The next lemma it is proved in \cite{two}, but for the convenience of the reader we give a proof also here.

\begin{lemma}\label{beta}
	Let $b=(b_1,b_2)\in K$ with $b\le\gamma$. Then  $b$ is maximal in $\K$ if and only if $\gamma-b\in S$ and $\Delta^S(\gamma-b)=\emptyset$.
\end{lemma}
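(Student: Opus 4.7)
The content of the equivalence reduces to showing that, given $b\in K$ (equivalently $\Delta^S(\gamma-b)=\emptyset$), maximality of $b$ in $K$ is equivalent to $c:=\gamma-b\in S$; the second condition in the statement is then automatic.

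The direction ``$c\in S\Rightarrow b$ maximal in $K$'' is immediate: for any $a\in\Delta_i(b)$, the vector $\gamma-a$ shares its $i$-th coordinate with $c$ and has a strictly smaller $j$-th coordinate ($j\ne i$), so $c\in\Delta_i^S(\gamma-a)$ and consequently $a\notin K$. This handles the easy direction with no use of (G2) or infima.

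For the converse, my plan is to assume $b$ maximal in $K$ and construct $c=\gamma-b$ explicitly as an element of $S$. First I would show that $F:=\{s\in S\mid s_1=c_1\}$ is nonempty: maximality forces $b+ke_2\notin K$ for all $k\ge 1$, so $\Delta^S(c-ke_2)\ne\emptyset$; once $k>c_2$ the set $\Delta_2^S(c-ke_2)$ is empty for lack of a valid second coordinate, hence $\Delta_1^S(c-ke_2)\ne\emptyset$ produces an $s\in S$ with $s_1=c_1$. Combining this with $\Delta_1^S(c)=\emptyset$ (from $b\in K$) gives that every element of $F$ has second coordinate $\le c_2$, so $M:=\max\{s_2\mid s\in F\}$ exists and is $\le c_2$. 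If $M=c_2$ then $c=(c_1,M)\in S$ and we are done; otherwise $\tau:=(c_1,M)\in S$ with $M<c_2$, and I plan to derive a contradiction by exhibiting $a:=b+(c_2-M)e_2\in\Delta_1(b)$ as an element of $K$.

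The crucial and delicate step is to show $\Delta^S(\tau)=\emptyset$. The half $\Delta_1^S(\tau)=\emptyset$ is immediate from the maximality of $M$. The other half, $\Delta_2^S(\tau)=\emptyset$, is the main obstacle: if some $s\in S$ had $s_2=M$ and $s_1>c_1$, then (G2) applied to $\tau$ and $s$ (which share the second coordinate and have distinct first coordinates) produces $c'\in S$ with $c'_1=c_1$ and $c'_2>M$. A dichotomy on $c'_2$ then closes the proof: if $c'_2\le c_2$ then $c'\in F$ contradicts the maximality of $M$, whereas $c'_2>c_2$ gives $c'\in\Delta_1^S(c)$, contradicting $\Delta^S(c)=\emptyset$. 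The hard part is precisely choosing $\tau$ so that this single invocation of (G2) is simultaneously squeezed against both the maximality of $M$ and the hypothesis $b\in K$; neither condition alone would suffice.
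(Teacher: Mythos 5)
Your proposal is correct and follows essentially the same route as the paper: the easy direction is the same direct computation showing $\gamma-b\in\Delta^S(\gamma-a)$ for $a\in\Delta(b)$, and for the hard direction your $\tau=(c_1,M)$ is exactly the paper's choice of $y=(\gamma_1-b_1,y_2)$ with $y_2$ maximal, with the same use of (G2) to kill $\Delta_2^S(\tau)$ and the same final contradiction via $\gamma-\tau\in\Delta_1(b)\cap\K$. Your write-up is a slightly cleaner rearrangement (it skips the paper's unused observation that $\gamma_2-b_2\in S_2$), but it is not a genuinely different argument.
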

\begin{proof} 
Let $b\in K$. 
Then $\Delta^S(\gamma-b)=\emptyset$. 
Let us suppose that $\gamma-b\notin S$.
 If $\gamma_1-b_1\notin S_1$, then  $\Delta^S((\gamma_1-b_1,-1))=\emptyset$. 
 Hence $(b_1,C_2)\in K$ and $b$ is not maximal  in $K$. The same applies to $\gamma_2-b_2$. 
 Thus $\gamma_1-b_1\in S_1$ and $\gamma_2-b_2\in S_2$. Let $y=(\gamma_1-b_1,y_2)\in S$. 
 Since $\Delta^S(\gamma-b)=\emptyset$, we have $y_2<\gamma_2-b_2$. 
 Assume that $\Delta^S(y)=\emptyset$. 
 We get $\gamma-y=(b_1,\gamma_2-y_2)\in K$ and $b$ is not maximal in $K$ as $\gamma_2-y_2>b_2$, which is absurd. 
 Hence for every $y\in S$ such that $y=(\gamma_1-b_1,y_2)$ for some $y_2\in \mathbb N$, we have that $y_2<\gamma_2-b_2$ and $\Delta^S(y)\neq \emptyset$. 
 If we choose $y_2$ to be maximum, then $\Delta^S(y)\neq \emptyset$ implies that there exists $y'=(y_1',y_2')\in S$ such that $y_1'>y_1$ and $y_2'=y_2$. But then condition (G2) forces the existence of $y''=(y_1'',y_2'')\in S$ with $y_1''=y_1=\gamma_1-b_1$ and $y_2''> y_2$. But this contradicts the maximality of $y_2$. This shows that $\gamma-b\in S$.
	
Let now suppose that  $\gamma-b\in S$ and $\Delta^S(\gamma-b)=\emptyset$. 
The second condition implies, by definition, that $b \in K$. 
If it is not maximal, there exist either $(b_1,y_2) \in K$, with $y_2>b_2$, or $(y_1,b_2)\in K$, with $y_1>b_1$. 
But $\gamma-b\in \Delta^S_2(\gamma_1-b_1,\gamma_2-y_2)$ and $\gamma-b\in \Delta^S_1(\gamma_1-y_1,\gamma_2-b_2)$ a contradiction against $(b_1,y_2) \in K$ and $(y_1,b_2)\in K$. Hence $b$ is maximal in $K$.
\end{proof}

\begin{remark}
Observe that Lemma \ref{beta} does not generalize to the $n$ dimensional case.
\end{remark}
Now we are ready to give a tentative good generating system for the canonical ideal.

\begin{proposition}\label{gens-canonical}
	A good generating system of generators for $\K$ is given by the following elements:
	\begin{itemize}
		\item $(\gamma_1-x_1,C_2)$ for $x_1\not\in S_1$,
		\item $(C_1,\gamma_2-x_2)$ for $x_2\notin S_2$,
		\item $\gamma-\alpha$ for $\alpha\in S$ with $\Delta^S(\alpha)=\emptyset$.
	\end{itemize}
\end{proposition}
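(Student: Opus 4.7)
The plan is to verify $[H]_{\C}=\mathrm{Small}(\K)$ directly, where $H$ denotes the union of the three families of elements listed.

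For the inclusion $[H]_\C\subseteq\mathrm{Small}(\K)$, I first check that $H\subseteq\K$. Elements of the first two types lie in $\K$ by Lemma~\ref{beta1}. For an element $\gamma-\alpha$ of the third type, $\Delta^S(\gamma-(\gamma-\alpha))=\Delta^S(\alpha)=\emptyset$ by hypothesis, so $\gamma-\alpha\in\K$ by the very definition of $\K$. Because $\K$ is a good relative ideal of $S$, it is closed under the action $H+S$ and under componentwise infima, hence $[H]\subseteq\K$, and therefore $[H]_\C=\C\wedge[H]\subseteq\C\wedge\K=\mathrm{Small}(\K)$.

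For the reverse inclusion, I take $b\in\mathrm{Small}(\K)$ and split by position. If $b_2=\C_2$ and $b_1<\C_1$, Lemma~\ref{beta1} forces $\gamma_1-b_1\notin S_1$, so $b$ itself is a generator of the first type; symmetrically when $b_1=\C_1$ and $b_2<\C_2$. The corner case $b=\C$ is handled separately: since $S$ is local, $\Delta^S(0)=\emptyset$, so $\gamma=\gamma-0\in H$, and then $\gamma+\C\in H+S\subseteq[H]$ gives $\C=\C\wedge(\gamma+\C)\in[H]_\C$.

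The central case is $b_1<\C_1$ and $b_2<\C_2$. Here I would set
\[
b^{(1)}_2=\max\{y\le\C_2\mid (b_1,y)\in\K\},\qquad b^{(2)}_1=\max\{x\le\C_1\mid (x,b_2)\in\K\},
\]
with both maxima attained because $b_2$ and $b_1$ lie in the respective sets. Writing $b^{(1)}=(b_1,b^{(1)}_2)$ and $b^{(2)}=(b^{(2)}_1,b_2)$, one has $b=b^{(1)}\wedge b^{(2)}$ immediately. The crux is to show $b^{(1)},b^{(2)}\in H$. If $b^{(1)}_2=\C_2$, then Lemma~\ref{beta1} gives $\gamma_1-b_1\notin S_1$, so $b^{(1)}$ is of the first type. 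Otherwise $b^{(1)}_2<\C_2$ makes $b^{(1)}$ maximal in its vertical fiber of $\K$; the two-dimensional equivalence of vertical and horizontal maximality noted just before Lemma~\ref{beta} promotes this to maximality in $\K$, and since $b^{(1)}\le\gamma$, Lemma~\ref{beta} yields $\alpha:=\gamma-b^{(1)}\in S$ with $\Delta^S(\alpha)=\emptyset$, so $b^{(1)}$ is of the third type. The symmetric argument treats $b^{(2)}$, so $b=b^{(1)}\wedge b^{(2)}\in[H]$, and $b\le\C$ gives $b\in[H]_\C$. The main subtlety I anticipate is verifying that the maximum defining $b^{(1)}_2$ really yields an element maximal in $\K$ globally rather than just within $\mathrm B(\C)$: one needs to rule out $\K$-elements $(b_1,y)$ with $y>\C_2$, which follows by inspecting $\Delta^S(\gamma-(b_1,y))$ for such $y$ and observing that $\K$-membership forces $\gamma_1-b_1\notin S_1$, precisely the condition that fails in the case $b^{(1)}_2<\C_2$.
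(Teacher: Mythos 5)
Your argument is correct and follows exactly the route the paper intends: the paper's proof is just ``follows immediately by the previous three lemmas,'' and you have supplied the missing details, using Lemma~\ref{beta1} for the border elements and the vertical/horizontal maximality equivalence together with Lemma~\ref{beta} to identify the interior fiber-maxima as elements of the form $\gamma-\alpha$. In particular, your handling of the subtlety about fiber elements $(b_1,y)$ with $y>\C_2$ is right and is the one point genuinely worth spelling out.
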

\begin{proof} The proof follows immediately by the previous three lemmas.
\end{proof}

In general the good generating system of Proposition \ref{gens-canonical} is not minimal.

\begin{example}
Let us calculate the canonical ideal of $S\bowtie E$, with
$S=\langle 3,5\rangle$ and $E=0+S$, which in this case is the
canonical ideal of the numerical semigroup $S$. In this case,
Proposition \ref{gens-canonical} yields the following:
\begin{verbatim}
gap> s:=NumericalSemigroup(3,5);;
gap> s:=NumericalSemigroup(3,5);;
gap> e:=3+s;
<Ideal of numerical semigroup>
gap> c:=canonicalIdealOfGoodSemigroup(g);
[ [ 0, 0 ], [ 3, 11 ], [ 5, 5 ], [ 6, 11 ], [ 8, 11 ], [ 9, 11 ],
  [ 10, 10 ], [ 11, 3 ], [ 11, 6 ], [ 11, 8 ], [ 11, 9 ] ]
\end{verbatim}
And in this case $\{(0,0)\}$ is a generating system for the canonical ideal of $S$.
\end{example}


\section{Arf good semigroups}\label{sec:arf}

Let $S\subseteq \mathbb N^n$ be a semigroup and $E$ be a relative ideal of $S$, with $e=\mathrm m(E)$. Then $E$ is called \emph{stable} if $E+E=e+E$. It is well known that the stability of $E$ is equivalent to $E-E=E-e$.
The semigroup $S$ is \emph{Arf} if for all $a\in S$, $S(a)$ is stable, where
\[S(a)=\{ b\in S \mid b\ge a\}.\]
We notice that the inclusion $S(a)-S(a)\subseteq S(a)-a$ is always
true, since if $z\in S(a)-S(a)$, $z+b\in S(a)$ for all $b\in S(a)$
and, in particular, $z+a\in S(a)$.

In the numerical semigroup case, there are many alternative definitions and characterizations (see for instance \cite{bdf}).
We show next that the following characterization still holds for
subsemigroups of $\mathbb N^n$.

\begin{lemma}\label{translate-Arf}
A subsemigroup $S$ of $\mathbb N^n$ has the Arf property if and only if for all $a,b,c\in S$, with $b\ge a$ and $c\ge a$, we have $b+c-a\in S$.
\end{lemma}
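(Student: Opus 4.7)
The plan is to prove both implications directly from the definition of stability applied to the subsemigroup tails $S(a) = \{b \in S \mid b \ge a\}$, using the fact that $S$ is Arf exactly when $S(a) + S(a) = a + S(a)$ for every $a \in S$.

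For the forward direction, suppose $S$ is Arf and fix $a, b, c \in S$ with $b \ge a$ and $c \ge a$. Then $b, c \in S(a)$, so by stability $b + c \in S(a) + S(a) = a + S(a)$. This means $b + c = a + d$ for some $d \in S(a)$; hence $b + c - a = d \in S(a) \subseteq S$, as desired.

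For the converse, assume the translation condition and fix $a \in S$. We must verify $S(a) + S(a) = a + S(a)$. The inclusion $a + S(a) \subseteq S(a) + S(a)$ is immediate since $a \in S(a)$. For the reverse, pick $b, c \in S(a)$; by hypothesis $b + c - a \in S$, and since $b \ge a$ and $c \ge a$ give $b + c \ge 2a$ componentwise, we have $b + c - a \ge a$, so $b + c - a \in S(a)$. Thus $b + c = a + (b + c - a) \in a + S(a)$, completing the proof.

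There is no serious obstacle here; the only small points to check carefully are that the subtraction $b + c - a$ lies in $\mathbb N^n$ (guaranteed by $b \ge a$) and that the resulting element actually belongs to $S(a)$ rather than merely to $S$ (guaranteed componentwise by $b, c \ge a$). Both hold coordinatewise from the partial order $\le$ on $\mathbb N^n$ introduced at the beginning of Section \ref{sec:good-sem}, so the argument goes through without further complication.
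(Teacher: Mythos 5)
Your proof is correct and follows essentially the same route as the paper: both directions are a direct unwinding of the stability of $S(a)$, whose minimal element is $a$. The only cosmetic difference is that you work with the sum form $S(a)+S(a)=a+S(a)$ of stability, whereas the paper phrases the same computation through the equivalent quotient form $S(a)-S(a)=S(a)-a$; the content is identical and your version is, if anything, slightly more self-contained since it uses the definition directly.
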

\begin{proof}
\emph{Necessity.} Assume that $a,b,c\in S$ with $b\ge a$ and $c\ge a$. As $b-a\in S(a)-a$ and, by hypothesis, $S(a)-a=S(a)-S(a)$, we have that $(b-a)+S(a)\subseteq S(a)$. Since $c\in S(a)$, we get $b-a+c=b+c-a\in S(a)\subseteq S$.

\emph{Sufficiency.} We know that $S(a)-S(a) \subseteq S(a)-a$. For the other inclusion, assume now that $z\in S(a)-a$ and we must show that $z\in S(a)-S(a)$.
As $z\in S(a)-a$, we have that $z=b-a$ for some $b\in S(a)$. Take $c\in S(a)$.
Then $z+c=b+c-a$, which is in $S(a)$ by hypothesis (it is in $S$ and $b+c-a=b+(c-a)\ge a$).
Hence $z+S(a)\subseteq S(a)$ and thus $z\in S(a)-S(a)$.
\end{proof}

One could ask if it true that, in order to verify if a good semigroup has the Arf property it
suffices to check the above condition in the set of small elements of the good semigroup. We can only prove it for the case $n=2$.

\begin{proposition}\label{effective-Arf}
Let $S\subseteq \mathbb N^2$ be a good semigroup. 
Then $S$ has the Arf property if and only if for all $a,b,c\in \mathrm{Small}(S)$, with $b\ge c$ and $b\ge  a$, we have $b+c-a\in S$.
\end{proposition}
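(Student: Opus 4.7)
The necessity direction is immediate from Lemma \ref{translate-Arf}: given $a, b, c \in \mathrm{Small}(S) \subseteq S$ with $b \geq c$ and $b \geq a$ (which together with the symmetry $b \leftrightarrow c$ of the conclusion $b+c-a$ puts us in the hypotheses of the Lemma), the Arf property forces $b+c-a \in S$.

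For sufficiency, my plan is to verify the condition of Lemma \ref{translate-Arf} on all of $S$, using the assumed condition on small elements. So let $a, b, c \in S$ with $b \geq a$ and $c \geq a$; by the $b \leftrightarrow c$ symmetry of the conclusion I may also assume $b \geq c$, hence $b \geq c \geq a$. Set $d := b+c-a$. If $d \geq \C$ then $d \in S$ by the definition of the conductor, so assume some coordinate of $d$ is strictly below the corresponding coordinate of $\C$, say $d_1 < C_1$. Using $c_1 \geq a_1$, the equality $d_1 = b_1 + c_1 - a_1 \geq b_1$ forces $a_1 \leq c_1 \leq b_1 \leq d_1 < C_1$.

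Now I truncate at $\C$: let $a' := a \wedge \C$, $b' := b \wedge \C$, $c' := c \wedge \C$. By (G1) these lie in $S$, they are small by construction, and $b' \geq c' \geq a'$. The hypothesis applied to the triple $(a',b',c') \subset \mathrm{Small}(S)$ gives $e := b' + c' - a' \in S$, and in the first coordinate $e_1 = d_1$. If additionally $d_2 < C_2$, the same reasoning in the second coordinate shows $a_2, b_2, c_2 < C_2$, so $a, b, c$ were already in $\mathrm{Small}(S)$ and the hypothesis directly yields $d \in S$. Otherwise $d_2 \geq C_2$, and a short case analysis on the positions of $a_2 \leq c_2 \leq b_2$ relative to $C_2$ shows that $e_2 \geq C_2$ in every subcase: if $a_2 > C_2$ then all three truncations equal $C_2$ and $e_2 = C_2$; if all three are $\leq C_2$ then $e_2 = d_2 \geq C_2$ by assumption; in the intermediate cases ($b_2 > C_2$ and possibly $c_2 > C_2$, with $a_2 \leq C_2$) one computes $e_2 = C_2 + \min(c_2,C_2) - a_2 \geq C_2$ since $\min(c_2,C_2) \geq a_2$. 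Hence $e \in S$ with $e_1 = d_1 < C_1$ and $e_2 \geq C_2$. Applying (G1) to $e$ and $\C$ yields $(d_1, C_2) = e \wedge \C \in S$, and this element belongs to $\partial_{\{2\}}(S)$, so Lemma \ref{lem:contains-border-axes} places the whole ray $(d_1, C_2) + H_{\{2\}}$ inside $S$; taking $y = d_2 \geq C_2$ gives $d \in S$.

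The delicate step is the subcase $d_2 \geq C_2$: one needs the truncated combination $e$ to retain its second coordinate $\geq C_2$, since this is exactly what makes the border lemma \ref{lem:contains-border-axes} applicable to finish the argument. This verification is a small case analysis comparing each of $a_2, b_2, c_2$ to $C_2$, and it is precisely here (together with the coordinate-wise splitting into the two cases $d_2 < C_2$ and $d_2 \geq C_2$) that the restriction $n=2$ is essentially used: in higher dimension the truncate-and-use-borders strategy must control several coordinates simultaneously, and the clean two-step reduction described above no longer closes.
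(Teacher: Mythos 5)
Your proof follows essentially the same route as the paper's: reduce to small elements by truncating $a,b,c$ at the conductor, apply the hypothesis to the truncated triple, and recover $b+c-a\in S$ via Lemma \ref{lem:contains-border-axes} (equivalently, Proposition \ref{prop:membership}). Your bookkeeping in the second coordinate is in fact slightly more careful than the paper's: you only claim $e_2\ge \C_2$ and then intersect with $\C$, whereas the paper asserts that the truncated combination equals $(b_1+c_1-a_1,\C_2)$ on the nose, which is not literally true.

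One step, however, is not valid as written: ``by the $b\leftrightarrow c$ symmetry I may also assume $b\ge c$.'' In the partial order on $\mathbb N^2$ two elements above a common lower bound need not be comparable (take $b=(5,1)$, $c=(1,5)$, $a=(0,0)$), so you cannot reduce to the chain $b\ge c\ge a$; the same confusion appears in your necessity direction, where ``$b\ge c$ and $b\ge a$'' does not, even after swapping $b$ and $c$, yield the hypothesis ``$b\ge a$ and $c\ge a$'' of Lemma \ref{translate-Arf}. The statement of the proposition should be read (as the paper's own proof does) with the condition ``$b\ge a$ and $c\ge a$,'' and that is the condition your argument actually needs. Fortunately nothing essential breaks: from $b_1,c_1\ge a_1$ and $d_1<\C_1$ you already get $a_1,b_1,c_1\le d_1<\C_1$ without any comparability of $b$ and $c$; the truncated triple satisfies $b'\ge a'$ and $c'\ge a'$; and your case analysis for $e_2\ge\C_2$ only uses $\min(b_2,\C_2)\ge\min(a_2,\C_2)$ and $\min(c_2,\C_2)\ge\min(a_2,\C_2)$ --- though you should add the subcase $c_2>\C_2\ge b_2$, which your ``intermediate cases'' silently exclude by assuming $c_2\le b_2$. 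With the spurious WLOG removed and the hypothesis read correctly, the proof is sound and matches the paper's.
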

\begin{proof}
Clearly, if $S$ has the Arf property, by
Lemma \ref{translate-Arf}, we have that for all $a,b,c\in
\mathrm{Small}(S)$, with $b\ge a$ and $c\ge a$, we have $b+c-a\in
S$.

For the converse, let $a,b,c\in S$ with $b\ge a$ and $c\ge a$. We have to prove that $b+c-a\in S$. Notice that $b+(c-a)\ge b$ and $b+c-a=c+(b-a)\ge c$. This  if either if $b$ or $c$ are greater than $C$, then $b+(c-a)\in S$. Also if $b,c\in \mathrm{Small}(S)$, then $a\in \mathrm{Small}(S)$ too and by hypothesis $b+c-a\in S$.

So it remains to see what happens when $b$ or $c$ are in the upper or right bands of $\mathrm{Small}(S)$.

Assume that $b_1< C_1$ and $b_2\ge C_2$. Then $a_1<C_1$. Then $b_2+c_2-a_2\ge C_2$. Clearly if $b_1+c_1-a_1\ge C_1$, then $b+c-a\ge C$ and consequently $b+c-a\in S$. If $b_1+c_1-a_1<C_1$, then $b+c-a\in S$ if and only if $(C_2,b_1+c_1-a_1)\in S$ by Proposition \ref{prop:membership}. Take $b'=b\wedge C$, $c'=c\wedge C$ and $a'=a\wedge C$. Then $b'+c'-a'=(C_2,b_1+c_1-a_1)$, and $a',b',c'\in \mathrm{Small}(S)$. By hypothesis $b'+c'-a' \in S$, and this leads to $b+c-a\in S$.

The case $b_2<C_2$ and $b_2\ge C_1$ is similar. 
\end{proof}

\begin{remark}\label{stableandarf}
In  \cite[Lemma 3.20]{a-u}, it is proved that, for a good semigroup $S$, the ideal $S(a)$ is stable for every $a\le \C(S)$ if and only if $S$ has the Arf property. So, if one wants to check the Arf property for a good semigroup $S$, it is more convenient to work with ideals rather than elements. 
\end{remark}

We would like to define the Arf closure of a good semigroup $S$ as the smallest Arf good semigroup containing $S$;
to do this we need to prove that the intersection of good Arf semigroups containing $S$ is a good Arf semigroup.

\begin{lemma}\label{stable}
$E$ is stable if and only if $E-e$ is a semigroup.
In particular, $E$ is a stable good ideal if and only if $E-e$ is a good semigroup.
\end{lemma}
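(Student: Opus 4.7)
The plan is to prove the two equivalences in sequence, exploiting the basic observation that translation by $-e$ is a bijection $E\to E-e$ that commutes with both $+$ and $\wedge$.

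First I would handle the initial equivalence: $E$ is stable iff $E-e$ is a (sub)semigroup of $\mathbb N^n$. Note that $E-e\subseteq \mathbb N^n$ because $e=\mathrm m(E)$ is the minimum of $E$, and $0=e-e\in E-e$, so the only thing at stake is closure under $+$. For the forward direction, take $x,y\in E-e$, i.e.\ $x+e,y+e\in E$. Stability gives $(x+e)+(y+e)\in E+E=e+E$, which rewrites as $x+y+e\in E$, i.e.\ $x+y\in E-e$. Conversely, if $E-e$ is closed under addition, take $a,b\in E$; then $a-e,b-e\in E-e$, so $(a-e)+(b-e)=a+b-2e\in E-e$, giving $a+b-e\in E$, i.e.\ $a+b\in e+E$. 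Since the reverse inclusion $e+E\subseteq E+E$ is trivial (as $e\in E$), we get $E+E=e+E$.

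Next, the ``in particular'' part. I interpret it as: given that $E$ is a (good) relative ideal with minimum $e$, stability of $E$ is equivalent to $E-e$ being a \emph{good} semigroup. Having just seen that stability corresponds exactly to $E-e$ being a submonoid of $\mathbb N^n$, I only need to verify that the additional axioms (G1), (G2), (G3) for $E-e$ are equivalent (under the translation $x\mapsto x-e$) to the corresponding axioms for $E$. The key algebraic fact is that infima commute with translations:
\[
(a-e)\wedge(b-e)=(a\wedge b)-e,
\]
so $a\wedge b\in E$ iff $(a-e)\wedge(b-e)\in E-e$, giving the equivalence of (G1) for $E$ and (G1) for $E-e$. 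For (G2), the same translation preserves coordinatewise comparisons and minima, so an element $c$ witnessing (G2) for a pair $a,b\in E$ yields the witness $c-e$ for the pair $a-e,b-e\in E-e$, and vice versa. Finally, for (G3): since $E$ is a relative ideal, it admits a conductor $\mathrm C(E)\ge e$, and $\mathrm C(E)+\mathbb N^n\subseteq E$ translates to $(\mathrm C(E)-e)+\mathbb N^n\subseteq E-e$, providing the required conductor for $E-e$; conversely, a conductor of $E-e$ shifts up by $e$ to give one for $E$.

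Nothing here is conceptually hard. The mildest obstacle is being careful that $E-e\subseteq \mathbb N^n$ so that it actually qualifies as a good semigroup in the sense of Section \ref{sec:good-sem}, but this is immediate from $e=\mathrm m(E)$. Everything else is a one-line translation argument using that $+$ and $\wedge$ are invariant under the shift $x\mapsto x-e$.
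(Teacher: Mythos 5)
Your proof is correct and follows essentially the same route as the paper: the converse is the identical computation $(a-e)+(b-e)\in E-e \Rightarrow a+b\in e+E$, and the forward direction differs only cosmetically (you verify closure of $E-e$ directly, while the paper notes that stability gives $E-e=E-E$, which is automatically a semigroup). Your explicit check that the axioms (G1)--(G3) are preserved under the shift $x\mapsto x-e$ fills in what the paper dismisses as ``straightforward''.
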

\begin{proof}
If $E$ is stable, then $E-e=E-E$ is a semigroup.

As for the converse, let $E-e$ be a semigroup and let us prove that $E+E\subseteq e+E$ (the other inclusion is trivial).
Let $b,c\in E$. By hypothesis $(b-e)+(c-e)=d-e$ for some $d\in E$. Hence $b+c=e+d\in e+E$.

The second statement is straightforward.
\end{proof}

\begin{proposition}\label{Arfclosure}
Let $S$ be a good semigroup. The intersection of good Arf semigroups containing $S$ is a good Arf semigroup.
\end{proposition}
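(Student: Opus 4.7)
The plan is to verify that $\bar S := \bigcap_\alpha T_\alpha$, the intersection taken over every Arf good semigroup $T_\alpha$ containing $S$, is itself an Arf good semigroup. The family is nonempty since $\mathbb N^n$ is Arf good and contains $S$, so $\bar S$ is a well-defined submonoid of $\mathbb N^n$ containing $S$.

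The easy properties are inherited coordinate-wise from the individual $T_\alpha$. Closure under $+$ is immediate. Axiom (G1) holds because $a\wedge b\in T_\alpha$ for every $\alpha$ whenever $a,b\in\bar S$. For (G3), each $T_\alpha\supseteq S$ satisfies $\C(S)+\mathbb N^n\subseteq T_\alpha$, hence $\C(S)+\mathbb N^n\subseteq\bar S$. The Arf property, in the symmetric form of Lemma~\ref{translate-Arf}, is preserved: if $a,b,c\in\bar S$ with $b,c\ge a$, then $b+c-a$ lies in every $T_\alpha$, and therefore in $\bar S$.

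The nontrivial axiom is (G2). Given $a,b\in\bar S$ with $a_i=b_i$, one must produce a single $c\in\bar S$ with $c_i>a_i$, with $c_j=\min\{a_j,b_j\}$ on indices $j$ where $a_j\neq b_j$, and with $c_j\ge a_j$ on the remaining indices $j\neq i$. Set $m=a\wedge b\in\bar S$. In each $T_\alpha$, (G2) produces a witness $c^{(\alpha)}\ge m$ with $c^{(\alpha)}_i>m_i$ and $c^{(\alpha)}_j=m_j$ wherever $a_j\ne b_j$. Iterating the Arf identity of Lemma~\ref{translate-Arf} yields $m+k(c^{(\alpha)}-m)\in T_\alpha$ for every $k\ge 0$: on indices where $a_j\ne b_j$ the sequence stays pinned at $m_j$, while the remaining coordinates (in particular the $i$-th) grow linearly in $k$. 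Taking the infimum in $T_\alpha$ with $\C(S)\in T_\alpha$ via (G1) then caps the growing coordinates at $\C(S)_j$, producing an element of $T_\alpha$ whose coordinates depend only on $a$, $b$, and $\C(S)$, and hence lies in $\bar S$; when $a_i<\C(S)_i$ this element already witnesses (G2).

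The main obstacle is the strict inequality $c_i>a_i$ in the edge case $a_i\ge\C(S)_i$, where the capped $i$-th coordinate equals $\C(S)_i\le a_i$. In that regime $a$ already sits above the conductor on the $i$-axis, and one can instead invoke Lemma~\ref{lem:contains-border-axes} together with the abundance of elements in $\C(S)+\mathbb N^n\subseteq\bar S$ to pick a $c$ with $i$-th coordinate strictly above $a_i$, then correct the remaining coordinates by (G1) with $m$. Ensuring that the constructed witness genuinely lies in \emph{every} $T_\alpha$ simultaneously, rather than producing a family indexed by $\alpha$, and gluing the two cases cleanly across the threshold $a_i=\C(S)_i$, is the step I expect to require the most care.
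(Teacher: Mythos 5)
Your handling of closure under addition, of (G1), (G3), and of the Arf property of the intersection is correct, and the iteration $m+k(c^{(\alpha)}-m)\in T_\alpha$ obtained from Lemma \ref{translate-Arf} is sound. The gap sits exactly where you put the weight of the argument, in (G2), and it is more serious than the single edge case you flag. First, the capped element does \emph{not} depend only on $a$, $b$ and $\C(S)$: for an index $j\ne i$ with $a_j=b_j$ (which always occurs when $a=b$, and can occur for $n\ge 3$ even when $a\ne b$), property (G2) only forces $c^{(\alpha)}_j\ge m_j$, so one $T_\alpha$ may give a witness with $c^{(\alpha)}_j=m_j$ (coordinate pinned) and another a witness with $c^{(\alpha)}_j>m_j$ (coordinate growing); after iterating and capping, the $j$-th coordinate is $m_j$ in the first case and essentially $\C(S)_j$ in the second. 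Your claim that ``the remaining coordinates (in particular the $i$-th) grow linearly in $k$'' is therefore false for such $j$, you obtain one element \emph{per} $\alpha$ rather than a single element of $\bigcap_\alpha T_\alpha$, and the conclusion ``hence lies in $\bar S$'' is unjustified even when $a_i<\C(S)_i$. (A smaller point: you should cap at $m\vee\C(S)$ rather than at $\C(S)$, since $\C(S)_j<m_j$ would push a pinned coordinate below the required value $m_j$.) Second, in the edge case $a_i\ge\C(S)_i$ the proposed repair --- take something in $\C(S)+\mathbb N^n$ and ``correct the remaining coordinates by (G1) with $m$'' --- cannot work as stated: an infimum with $m$ forces the $i$-th coordinate down to $m_i=a_i$, destroying the strict inequality you need.

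The paper takes a different and much shorter route that avoids constructing a witness altogether: it reduces to the intersection of \emph{two} Arf good semigroups $T$ and $U$ (legitimate because only finitely many good semigroups contain $S$), observes that $T\cap U$ is Arf as a plain semigroup so that $(T\cap U)(a)$ is stable for every $a$, and then uses Lemma \ref{stable} to transfer (G2) from the translate $(T\cap U)(a)-a$ back to $(T\cap U)(a)$. If you want to salvage your direct construction, you must produce an $\alpha$-independent witness: for $n=2$ this is feasible, since every $j\ne i$ either satisfies $a_j\ne b_j$ (so the coordinate is pinned for all $\alpha$ and the capped element is common to all $T_\alpha$) or $a=b$ (dispatched by $a+\C(S)+\mathbf{e}_i\in\C(S)+\mathbb N^n\subseteq\bar S$), and in the edge case $a_i\ge\C(S)_i$ one can apply Lemma \ref{lem:contains-border-axes} inside each $T_\alpha$ (whose conductor is componentwise at most $\C(S)$) to the common capped element to raise its $i$-th coordinate --- but none of this is in your writeup, and for $n\ge 3$ the $\alpha$-dependence remains unresolved.
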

\begin{proof}
Let us prove that the intersection of two good Arf semigroups containing $S$, say $T$ and $U$, is a good Arf semigroup. 
This is enough as the number of semigroups in $\mathbb N^n$ containing $S$ is finite.

It is straightforward to check that the intersection of two
Arf semigroups (not necessarily good) is an Arf semigroup. 
Hence we need to check that the intersection of two Arf good semigroup $T$ and $U$ is good. 

Conditions (G1) and (G3) trivially hold for $T\cap U$. So we need only to check condition (G2).
Let $a\in T\cap U$.
Being $T\cap U$ an Arf semigroup as said above, we have that $(T\cap U)(a)$ is stable by definition.
Therefore, by Lemma \ref{stable}, being $a=\mathrm{m}(T\cap U(a))$, we get $(T\cap U)(a)-a$ is a good semigroup. 
In particular, condition (G2) holds for $(T\cap U)(a)-a$. 
This implies that it also holds in $(T\cap U)(a)$, and we are done.
\end{proof}

\begin{corollary}\label{Arf-cl}
Let $S$ be a good semigroup. Then there exists an Arf good semigroup $T$ containing $S$, such that for every Arf good semigroup $U$ containing $S$, the inclusion $T\subseteq U$ holds.
\end{corollary}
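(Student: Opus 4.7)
The plan is to define $T$ as the intersection of all Arf good semigroups containing $S$ and verify, via Proposition \ref{Arfclosure}, that this intersection still lies in the class we are intersecting over.

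First I would check that the family
\[
\mathcal{F}=\{U\subseteq \mathbb N^n\mid U \text{ is a good Arf semigroup and } S\subseteq U\}
\]
is nonempty: indeed $\mathbb N^n$ is itself an Arf good semigroup (trivially stable and satisfying (G1)--(G3)) and contains $S$, so $\mathbb N^n\in\mathcal{F}$. Next I would observe that $\mathcal{F}$ is in fact \emph{finite}. Any $U\in\mathcal{F}$ is a good semigroup with $\C(S)+\mathbb N^n\subseteq S\subseteq U$, so $\C(U)\le \C(S)$ and consequently $U$ is completely determined by the subset $U\cap\mathrm B(\C(S))$ of the finite box $\mathrm B(\C(S))=\{a\in\mathbb N^n\mid a\le \C(S)\}$. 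There are only finitely many such subsets, so $|\mathcal{F}|<\infty$.

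Having this, I would set
\[
T=\bigcap_{U\in\mathcal{F}}U.
\]
By Proposition \ref{Arfclosure}, the intersection of two elements of $\mathcal{F}$ is again a good Arf semigroup containing $S$; iterating this finitely many times along the finite family $\mathcal{F}$, I conclude that $T$ itself is a good Arf semigroup containing $S$, i.e.\ $T\in\mathcal{F}$. By construction $T\subseteq U$ for every $U\in\mathcal{F}$, and since every Arf good semigroup containing $S$ belongs to $\mathcal{F}$, this gives exactly the required minimality.

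The only subtle point — the obstacle that must be addressed rather than anything technically deep — is the passage from the binary statement of Proposition \ref{Arfclosure} to an intersection over a possibly very large class of semigroups. The finiteness argument via the conductor bound makes this passage painless; without it, one would need to argue that Proposition \ref{Arfclosure} extends to arbitrary intersections, which is true but requires a separate check that arbitrary intersections preserve (G2) (again reducible to the Arf-stability argument in the proof of Proposition \ref{Arfclosure}).
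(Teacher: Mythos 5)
Your proposal is correct and follows essentially the same route as the paper: the paper derives the corollary immediately from Proposition \ref{Arfclosure} by intersecting the (finite, nonempty) family of Arf good semigroups containing $S$, with the finiteness already invoked inside the proof of that proposition. Your explicit checks that the family is nonempty (via $\mathbb N^n$) and finite (via the conductor bound and determination by small elements) merely spell out details the paper leaves implicit.
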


We call the semigroup $T$ in the previous corollary the \emph{Arf closure} of $S$.



\begin{remark}
Let $S_i, i\in I$ be the projections on the $i-th$ axes of $S$. Let $T_i$ be the Arf closure of $S_i$ for all $i\in I$. Then $\prod_i T_i$ is an Arf good semigroup containing $S$. Hence the Arf closure $T$ of $S$ is contained in $\prod_i T_i$.
\end{remark}

\begin{lemma}\label{projection}
Let $U$ be any Arf good semigroup such that  $S\subseteq U \subseteq \prod_iT_i$. Then $\pi_i(U)=T_i$ for all $i\in I$.
\end{lemma}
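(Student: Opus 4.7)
The plan is to show that $\pi_i(U)$ is an Arf numerical semigroup containing $S_i$, so that $T_i\subseteq \pi_i(U)$ by minimality of the Arf closure; combined with the obvious inclusion $\pi_i(U)\subseteq T_i$ this gives equality.

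First I would record the easy inclusions. From $S\subseteq U$ we get $S_i=\pi_i(S)\subseteq \pi_i(U)$, and from $U\subseteq \prod_j T_j$ we get $\pi_i(U)\subseteq T_i$. Since $S$ has a conductor $\C$, the element $\C$ lies in $S\subseteq U$ together with $\C+k\mathbf e_i$ for all $k\ge 0$, so $\pi_i(U)$ contains $\C_i+\mathbb N$. Hence $\pi_i(U)$ is a submonoid of $\mathbb N$ with finite complement, i.e.\ a numerical semigroup.

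The heart of the proof is to verify the Arf property for $\pi_i(U)$ using the characterization of Lemma \ref{translate-Arf}. Given $x,y,z\in\pi_i(U)$ with $y\ge x$ and $z\ge x$, pick $a,b,c\in U$ with $\pi_i(a)=x$, $\pi_i(b)=y$, $\pi_i(c)=z$. The key move is to replace $a$ by
\[
a':=a\wedge b\wedge c,
\]
which lies in $U$ by property (G1). Since $x\le y$ and $x\le z$, we have $\pi_i(a')=\min(x,y,z)=x$, and by construction $a'\le b$ and $a'\le c$ coordinatewise. Now the Arf property of $U$, as reformulated in Lemma \ref{translate-Arf}, yields $b+c-a'\in U$, and applying $\pi_i$ gives
\[
y+z-x=\pi_i(b)+\pi_i(c)-\pi_i(a')\in \pi_i(U).
\]
Thus $\pi_i(U)$ is Arf. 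Since $T_i$ is by definition the smallest Arf numerical semigroup containing $S_i$, we conclude $T_i\subseteq \pi_i(U)$, and therefore $\pi_i(U)=T_i$.

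The only mildly non-trivial step is producing a common lower bound $a'$ of $b$ and $c$ in $U$ with the correct $i$-th projection; this is exactly what (G1) supplies, so the good-semigroup hypothesis on $U$ is used essentially here. Everything else is a direct application of the definition of Arf closure and the characterization of the Arf property given in Lemma \ref{translate-Arf}.
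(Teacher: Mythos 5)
Your proof is correct and follows essentially the same route as the paper: both reduce to showing $\pi_i(U)$ is Arf via the characterization of Lemma \ref{translate-Arf}, using (G1) to replace the lift of the smallest element by a common lower bound in $U$ with the right $i$-th coordinate. The only (cosmetic) difference is that you form $a\wedge b\wedge c$ in one step, whereas the paper performs the two meets successively.
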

\begin{proof}
As $U\subseteq \prod_i T_i$, we have that $\pi_i(U)\subseteq T_i$. 
So we only need to prove that $T_i\subseteq\pi_i(U)$. 
As $S\subseteq U$, we have $S_i\subseteq \pi_i(U)$. 
Thus if we show that $\pi_i(U)$ is an Arf semigroup, then $T_i\subseteq \pi_i(U)$, and we are done.

Let $i\in I$.
Recall that $\pi_i(U)$ is an Arf numerical semigroup if and only if for any $a,b,c\in \pi_i(U)$ with $a\ge c$ and $b\ge c$ we have $a+b-c\in \pi_i(U)$. 
So take $a,b,c\in \pi_i(U)$ with $a\ge c$ and $b\ge c$. Let $a',b',c'\in U$ be such that $a=\pi_i(a'), b=\pi_i(b'), c=\pi_i(c')$. 
If $a'\ge c'$ and $b'\ge c'$, as $U$ has the Arf property, $a'+b'-c'\in U$, and consequently $a+b-c\in \pi_i(U)$. 
If $a'\not\ge c'$, then we substitute $c'$ with $a'\wedge c'\in U$ ($U$ is good). 
We then still have $\pi_i(a'\wedge c')=\pi_i(a')\wedge\pi_i(c')=a\wedge c=c$ and $a'\ge a'\wedge c'$.
If now $b'\not\ge a'\wedge c'$, again we substitute $a'\wedge c'$ with $(a'\wedge c')\wedge b'$, which is also in $U$.
We then still have $\pi_i((a'\wedge c')\wedge b')=(\pi_i(a')\wedge\pi_i(c'))\wedge\pi_i(b')=c\wedge b=c$ and $b'\ge (a'\wedge c')\wedge b'$.
So now renominating $c'=(a'\wedge c')\wedge b'$ we have $a'+b'-c'\in U$ because $U$ is Arf. 
But then $\pi_i(a'+b'-c')=a+b-c \in \pi_i(U)$.
\end{proof}

In the two dimensional case it is easy to use the previous lemma in order to give an algorithm to compute the Arf closure. We explain in detail such an algorithm in the rest of the section.
In the general case the problem is more involved.
In \cite{zito} it is given a procedure to find all the Arf semigroups with a given multiplicity tree and, consequently, the Arf closure of a given semigroup.

In the two dimensional case we only have two projections from Lemma \ref{projection}, and we call them $T_1$ and $T_2$.

Both $T_1$ and $T_2$ have a multiplicity sequence, say
$\left\lbrace e_i\right\rbrace_{i\ge 0} $ for $T_1$ and
$\left\lbrace f_i\right\rbrace_{i\ge 0} $ for $T_2$, respectively,
which characterize them. With these multiplicity sequences it is
possible to construct the set of all good Arf
semigroups with $T_1$ and $T_2$ as their projections (see
\cite{arf}). In our case, $S\subseteq\mathbb{N}^2$, the set of
good Arf semigroups with $T_1$ and $T_2$ as
projections is totally ordered by inclusion (see Lemma 5.1,
\cite{arf}), following the ordering established by the
multiplicity trees (see Figure \ref{fig:trees}).


By definition of Arf closure and by Lemma \ref{projection}, the Arf closure $T$ of $S$ is
the smallest of such semigroups containing $S$ and it is obtained
as finite sums of the multiplicity vectors taken in a subtree
rooted in $(e_0,f_0)$.

If $T_1=\left\lbrace 0,s_1,s_2,\dots\right\rbrace $ and $T_2=\left\lbrace 0,u_1,u_2,\dots\right\rbrace $, let
$$
T^{(i)}=\left\lbrace (0,0),(s_1,u_1),\dots,(s_{i-1},u_{i-1})\right\rbrace \cup \left\lbrace T_1(s_i)\times T_2(u_i)\right\rbrace
$$
for every $i\ge 1$. The procedure in order to find the
Arf closure of a good semigroup $S$ is the following.

\begin{itemize}
   \item Calculate $S_i=\pi_i(S)$ for $i\in\{1,2\}$.
   \item Calculate the Arf closure $T_i$ for $i\in \{1,2\}$ (as explained for instance in \cite{arf-num}).
   \item Compute $T^{(2)}$.
   \item If $S\not\subseteq T^{(2)}$, then $T=T^{(1)}$. If $S\subseteq T^{(2)}$, then calculate $T^{(3)}$.
   \item  Repeat this process  until $S\subseteq T^{(i)}$ and $S\not\subseteq T^{(i+1)}$. Then $T=T^{(i)}$.
\end{itemize}


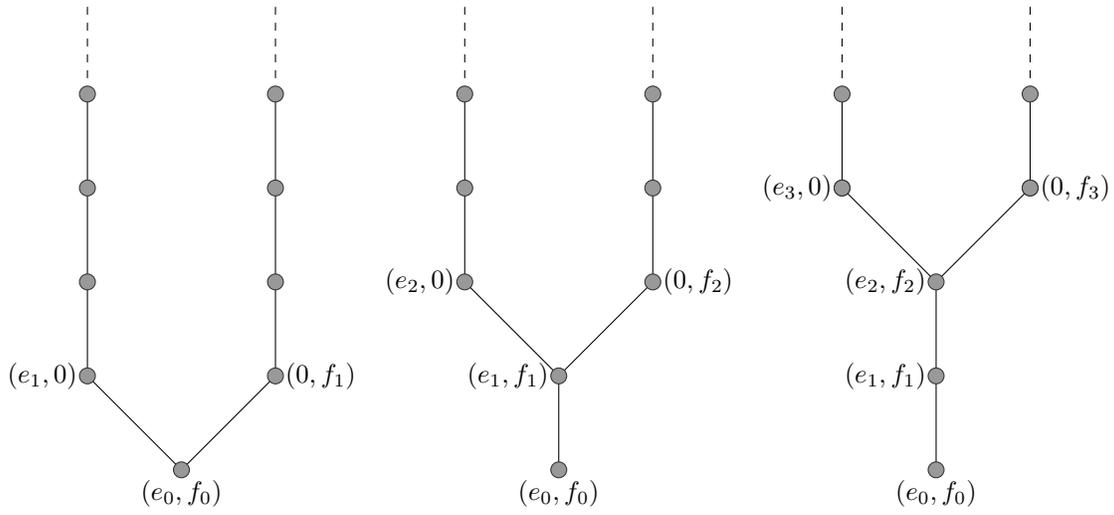
\begin{figure}[h]
\begin{tikzpicture}[y=.25cm, x=.25cm,font=\small]

\draw[dashed] (10,3) -- (10,8);

\draw[dashed] (0,3) -- (0,8);

\draw (10,-12) -- (10,3);

\draw (0,-12) -- (0,3);

\draw (5,-17) -- (0,-12);

\draw (5,-17) -- (10,-12);

\filldraw[fill=black!40,draw=black!80] (5,-17) circle (3pt)    node[anchor=north] {$(e_0,f_0)$};

\filldraw[fill=black!40,draw=black!80] (0,-12) circle (3pt)    node[anchor=east] {$(e_1,0)$};

\filldraw[fill=black!40,draw=black!80] (10,-12) circle (3pt)    node[anchor=west] {$(0,f_1)$};

\filldraw[fill=black!40,draw=black!80] (0,-7) circle (3pt)   ;

\filldraw[fill=black!40,draw=black!80] (10,-7) circle (3pt)  ;

\filldraw[fill=black!40,draw=black!80] (0,-2) circle (3pt)   ;

\filldraw[fill=black!40,draw=black!80] (10,-2) circle (3pt)  ;

\filldraw[fill=black!40,draw=black!80] (0,3) circle (3pt)   ;

\filldraw[fill=black!40,draw=black!80] (10,3) circle (3pt)  ;

\end{tikzpicture}
\begin{tikzpicture}[y=.25cm, x=.25cm,font=\small]

\draw[dashed] (10,3) -- (10,8);

\draw[dashed] (0,3) -- (0,8);

\draw (10,-7) -- (10,3);

\draw (0,-7) -- (0,3);

\draw (5,-12) -- (0,-7);

\draw (5,-12) -- (10,-7);

\draw (5,-17) -- (5, -12);

\filldraw[fill=black!40,draw=black!80] (5,-17) circle (3pt)    node[anchor=north] {$(e_0,f_0)$};

\filldraw[fill=black!40,draw=black!80] (5,-12) circle (3pt)    node[anchor=east] {$(e_1,f_1)$};

\filldraw[fill=black!40,draw=black!80] (0,-7) circle (3pt)   node[anchor=east] {$(e_2,0)$};

\filldraw[fill=black!40,draw=black!80] (10,-7) circle (3pt)  node[anchor=west] {$(0,f_2)$};

\filldraw[fill=black!40,draw=black!80] (0,-2) circle (3pt)   ;

\filldraw[fill=black!40,draw=black!80] (10,-2) circle (3pt)  ;

\filldraw[fill=black!40,draw=black!80] (0,3) circle (3pt)   ;

\filldraw[fill=black!40,draw=black!80] (10,3) circle (3pt)  ;

\end{tikzpicture}
\begin{tikzpicture}[y=.25cm, x=.25cm,font=\small]

\draw[dashed] (10,3) -- (10,8);

\draw[dashed] (0,3) -- (0,8);

\draw (10,-2) -- (10,3);

\draw (0,-2) -- (0,3);

\draw (5,-7) -- (10,-2);

\draw (5,-7) -- (0,-2);

\draw (5,-17) -- (5, -7);

\filldraw[fill=black!40,draw=black!80] (5,-17) circle (3pt)    node[anchor=north] {$(e_0,f_0)$};

\filldraw[fill=black!40,draw=black!80] (5,-12) circle (3pt)    node[anchor=east] {$(e_1,f_1)$};

\filldraw[fill=black!40,draw=black!80] (5,-7) circle (3pt)   node[anchor=east] {$(e_2,f_2)$};

\filldraw[fill=black!40,draw=black!80] (10,-2) circle (3pt)  node[anchor=west] {$(0,f_3)$};

\filldraw[fill=black!40,draw=black!80] (0,-2) circle (3pt)  node[anchor=east] {$(e_3,0)$};

\filldraw[fill=black!40,draw=black!80] (0,-2) circle (3pt)   ;

\filldraw[fill=black!40,draw=black!80] (10,-2) circle (3pt)  ;

\filldraw[fill=black!40,draw=black!80] (0,3) circle (3pt)   ;

\filldraw[fill=black!40,draw=black!80] (10,3) circle (3pt)  ;

\end{tikzpicture}
\caption{Multiplicity trees}
\label{fig:trees}
\end{figure}

\begin{example}\label{arfex}
Let $S$ be the good semigroup $[(4,3),(3,4)]_{(6,7)}$. We have on the left a picture of it and on the right a picture of its Arf closure.

\begin{center}
\begin{tikzpicture}[scale=.75]
 \begin{axis}[grid=both, xmin=0,xmax=8, ymin=0, ymax=8]
\addplot[->, style=dotted, very thick] coordinates{
(6,6)
(8,6)};
\addplot[->, style=dotted, very thick] coordinates{
(6,6)
(6,8)};
\addplot [pattern = north east lines,  draw=white] coordinates{
(6,6)
(8,6)
(8,8)
(6,8)
(6,6)
};
\addplot[only marks] coordinates{
(0,0)
(3,3)
(3,4)
(4,3)
(6,6)
};

\end{axis}
\end{tikzpicture}$\quad$
\begin{tikzpicture}[scale=.75]
 \begin{axis}[grid=both, xmin=0,xmax=8, ymin=0, ymax=8]
\addplot[->, style=dotted, very thick] coordinates{
(3,3)
(8,3)};
\addplot[->, style=dotted, very thick] coordinates{
(3,3)
(3,8)};
\addplot [pattern = north east lines,  draw=white] coordinates{
(3,3)
(8,3)
(8,8)
(3,8)
(3,3)
};
\addplot[only marks] coordinates{
(0,0)
(3,3)
};

\end{axis}
\end{tikzpicture}
\end{center}
Now we do the same with $[(5,3),(3,4)]_{(6,7)}$.
\begin{center}
\begin{tikzpicture}[scale=.75]
 \begin{axis}[grid=both, xmin=0,xmax=8, ymin=0, ymax=8]
 \addplot[->, style=dotted, very thick] coordinates{
(6,6)
(8,6)};
\addplot[->, style=dotted, very thick] coordinates{
(6,6)
(6,8)};
\addplot [pattern = north east lines,  draw=white] coordinates{
(6,6)
(8,6)
(8,8)
(6,8)
(6,6)
};\addplot[only marks] coordinates{
(0,0)
(3,3)
(3,4)
(5,3)
(6,6)
};
\end{axis}
\end{tikzpicture}$\quad$
\begin{tikzpicture}[scale=.75]
 \begin{axis}[grid=both, xmin=0,xmax=8,ymin=0, ymax=8]
\addplot[->, style=dotted, very thick] coordinates{
(5,3)
(8,3)};
\addplot[->, style=dotted, very thick] coordinates{
(3,3)
(3,8)};
\addplot[->, style=dotted, very thick] coordinates{
(5,3)
(5,8)};
\addplot [pattern = north east lines,  draw=white] coordinates{
(5,3)
(8,3)
(8,8)
(5,8)
(5,3)
};
\addplot[only marks] coordinates{
(0,0)
(3,3)
(5,3)
};

\end{axis}
\end{tikzpicture}
\end{center}
And finally with $[(3,4),(4,4)]_{(6,6)}$:
\begin{center}
\begin{tikzpicture}[scale=.75]
 \begin{axis}[grid=both, xmin=0,xmax=8, ymin=0, ymax=8]
\addplot[->, style=dotted, very thick] coordinates{
(6,6)
(8,6)};
\addplot[->, style=dotted, very thick] coordinates{
(6,6)
(6,8)};
\addplot [pattern = north east lines,  draw=white] coordinates{
(6,6)
(8,6)
(8,8)
(6,8)
(6,6)
};
\addplot[only marks] coordinates{
(0,0)
(3,3)
(4,4)
(6,6)
};

\end{axis}
\end{tikzpicture}$\quad$
\begin{tikzpicture}[scale=.75]
 \begin{axis}[grid=both, xmin=0,xmax=8, ymin=0, ymax=8]
\addplot[->, style=dotted, very thick] coordinates{
(6,6)
(8,6)};
\addplot[->, style=dotted, very thick] coordinates{
(6,6)
(6,8)};
\addplot [pattern = north east lines,  draw=white] coordinates{
(6,6)
(8,6)
(8,8)
(6,8)
(6,6)
};
\addplot[only marks] coordinates{
(0,0)
(3,3)
(4,4)
(5,5)
(6,6)
};

\end{axis}
\end{tikzpicture}
\end{center}
\end{example}

It is clear that performing successively the operations $b+c-a$
with $b\ge a, c\ge a$, we obtain a chain of subsemigroups of
$\mathbb{N}^2$ (not necessarily good) contained in the Arf closure
of $S$. We also know this procedure has to end after a finite
number of steps. So we get an Arf semigroup $U$ (not necessarily
good) containing $S$ and contained in its Arf closure $T$. The
following example shows that, unlike in the numerical semigroup
case, these semigroups might not agree.
\begin{example}\label{ex:arf-not-good-closure}
 Let $S$ be the good semigroup with \[
 \mathrm{Small}(S)=\{(0,0), (3,3), (4,4), (5,4), (4,6),(6,6)\}\]
 (and thus conductor equal to $(6,6)$). Then $T\setminus U=\{(4,5)\}$. Next picture depicts $U\cap[0,8]^2$ and the good semigroup $T$.

\begin{center}
\begin{tikzpicture}[scale=.75]
 \begin{axis}[grid=both, xmin=0,xmax=8, ymin=0, ymax=8]
\addplot[only marks] coordinates{
(0,0)
(3,3)
(4,4)
(4,6)
(5,4)
};
\addplot [pattern = north east lines,  draw=white] coordinates{
    (4,8)
    (4,6)
    (5,6)
    (5,4)
    (8,4)
    (8,8)
    (4,8)
};
\end{axis}
\end{tikzpicture} $\quad$
\begin{tikzpicture}[scale=.75]
\begin{axis}[grid=both, xmin=0,xmax=8, ymin=0, ymax=8]
\addplot[->, style=dotted, very thick] coordinates{
(4,4)
(8,4)};
\addplot[->, style=dotted, very thick] coordinates{
(4,4)
(4,8)};
\addplot [pattern = north east lines,  draw=white] coordinates{
(4,4)
(8,4)
(8,8)
(4,8)
(4,4)
};
\addplot[only marks] coordinates{
(0,0)
(3,3)
(4,4)
};
\end{axis}
\end{tikzpicture}
\end{center}
\end{example}

\begin{question}
    Notice that in Example \ref{ex:arf-not-good-closure}, if we close $U$ under infima,
 then the resulting monoid completely covers $T$. So the natural
question is that if this procedure always guarantees that the
output will be the (good) Arf closure of the initial good
semigroup. Notice that computationally speaking, it is easier to
perform the operations with the multiplicity trees rather than
doing the saturation under $a+b-c$ and then taking infima.
\end{question}
%
%
%
%
%
%
%
%
%

\end{document}